\documentclass [12pt,oneside] {article}
\usepackage{latexsym,amsfonts,amssymb,amscd,amsmath,amsthm,mathrsfs,stmaryrd} 
\usepackage{graphicx}
\usepackage [all]{xy}
\SelectTips{cm}{12}

\theoremstyle{plain}
\newtheorem {Thm} {Theorem}[section]
\newtheorem {Lem}[Thm] {Lemma}
\newtheorem {Prop}[Thm] {Proposition}
\newtheorem {Cor}[Thm] {Corollary}

\theoremstyle {definition}
\newtheorem {Def}[Thm] {Definition}

\newtheorem {Rem}[Thm] {Remark}
\newtheorem {Exa}[Thm] {Example}
\newenvironment{Pf}[1]{{\noindent\sc Proof #1:}}{\qed\\}
\newcommand {\specialmap} [4] {{#1\negmedspace : #2 #3 #4}}

\newcommand {\map} [3] {\specialmap {#1} {#2}{\to} {#3}}
\newcommand {\longmap} [3] {\specialmap {#1} {#2}{\longrightarrow} {#3}}

\newcommand {\simmap} [3] {\specialmap {#1} {#2}{\stackrel {\sim{\phantom{.}}} 
            {\longrightarrow}} {#3}}

\newcommand {\id} {\operatorname{id}}

\newcommand {\at}[1] {\arrowvert_{#1}}

\newcommand {\ind} {\operatorname{ind}}
\renewcommand {\(} {\left(}
\renewcommand {\)} {\right)} 

\renewcommand {\geq} {\geqslant}

\newcommand {\sub} {\subseteq}

\newcommand {\CC} {\mathbb C}
\newcommand {\on}[1] {\operatorname{#1}}

\newcommand {\Gpdf} {{\mathcal G\!\on{pd}_{\on{fin}}}}

\newcommand {\GSetsf} {{G\mathcal S\!\on{ets}_{\on{fin}}}}

\newcommand {\hinf} {{H_\infty}}

\renewcommand {\ind}[2] {\on{ind}_{#1}^{#2}}

\renewcommand {\leq} {\leqslant}

\newcommand {\Lt} {{\langle\tau,1\rangle}}

\newcommand {\mA} {{\mathcal{A}}}
\newcommand {\mF} {{\mathcal{F}}}

\newcommand {\pt} {\on{pt}}

\newcommand {\QQ} {\mathbb{Q}}
\newcommand {\res}[2] {\on{res}^{#1}_{#2}}

\newcommand {\Sj} {{S_j}}

\newcommand {\Sn} {{S_n}}

\renewcommand {\SS} {\mathbb{S}}

\newcommand {\Stab} {\on{Stab}}

\newcommand {\tensor}{\otimes}
 
\newcommand {\Tate} {{\on{Tate}}}

\newcommand {\ZZ} {\mathbb Z}

\newcommand {\lps} {[\! [}
\newcommand {\rps} {]\! ]}

\newcommand {\ps}[1] {\lps #1\rps}
%
% New macros for this paper in alphabetical order:
%

\newcommand {\inv}{^{-1}}

\newcommand {\mC} {\mathcal C}
\newcommand {\mI} {\mathcal I}
\newcommand {\mL} {\mathcal L}
\newcommand {\mM} {\mathcal M}

\newcommand {\ol}[1]{\overline{#1}}
\newcommand {\ul}[1]{\underline{#1}}

\renewcommand {\ul}[1] {\underline{#1}}
\newcommand{\uF} {{\ul F}}
\newcommand{\uG} {{\ul G}}
\newcommand{\uH} {{\ul H}}
\newcommand{\uK} {{\ul K}}
\newcommand{\uP} {{\ul P}}
\renewcommand {\tilde}[1] {\widetilde{#1}}
\newcommand{\lwr}{\smallint}%{\mathop{\on{\,\raisebox{.2ex}{\reflectbox{$\wr$}}\,}}}
\newcommand{\smalllwr}{\smallint}% {{\reflectbox{$\wr$}}}

\title{Global Mackey functors with operations and $n$-special lambda rings}
\author{Nora Ganter\thanks{This project was initiated during a SQuaRE workshop at the
  American Institute of Mathematics in the summer of 2008. At that time, the author
  was affiliated with Colby College and supported by NSF-grant
  DMS-0504539. Presently, the author is supperted by an Australian
  Research Fellowship and ARC Discovery Grant DP1095815.}  
  \\ The University of Melbourne} 
\date {\today}
\begin{document}
\maketitle
\tableofcontents
\section{Introduction}
Mackey functors were introduced by Dress and Green
\cite{Dress:Contributions}, \cite{Dress:Notes}, \cite{Green:Axiomatic}, following earlier
ideas of Bredon and Lam 
\cite{Bredon:Equivariant_cohomology:LNM}, \cite{Lam:Induction_theorems}. 
Excellent contemporary expositions are \cite{Webb:A_guide} and
\cite{Panchadcharam:Categories_of_Mackey_functors}.  
Inspired by the very first steps of the theory, we take the
point of view in \cite{Bredon:Equivariant_cohomology:LNM},
that these theories formalize the algebraic properties
of coefficient systems of equivariant (generalized) cohomology
theories. 

Many prominent examples of Mackey functors arise in this manner
or in a closely related context. For instance, 
the representation rings form the coefficients of topological
$K$-theory, group cohomology 
forms the coefficients of ordinary cohomology, and Burnside rings are
closely related to cohomotopy. 
We limit our attention to finite groups. 

Over the last decades,
our understanding of generalized cohomology has
evolved. Cohomology operations have emerged as an important part
of the theory. 
There is, in particular, Atiyah's work, putting the Adams operations
in $K$-theory into a common framework with
Steenrod's operations in cohomology \cite{Atiyah:Power_operations},
\cite{Adams:Vector_fields}, \cite{Steenrod:Cohomology_operations}. 
Axioms for these {\em `power operations'} were formulated by Bruner,
May, McClure and Steinberger\footnote{Bruner, May, McClure
  and Steinberger work in the Borel-equivariant
context, but the formulation for other equivariant theories is an easy
translation exercise, see for instance \cite{Ganter:Orbifold_genera}
or \cite{Ganter:Orbifold_K-Tate}.} in
\cite{Bruner:May:McClure:Steinberger},
and the modern foundations of homotopy theory
are set up in such a way, that `nice enough' multiplicative
cohomology theories automatically have power operations.
We would especially like to draw the reader's attention to the recent work of
Stefan Schwede \cite{Schwede:Orthogonal_spectra}. We understand that Schwede's {\em
  or\-tho\-go\-nal spectra} define genuinely equivariant theories with power operations,
whose coefficient systems form a large class of
examples for the formalism formulated below. 

A {\em global power functor} is a global Mackey functor with the extra
structure of power operations. This definition captures the
algebraic properties of the coefficient system of a (genuinely)
equivariant theory with cohomology operations. 
We note that there is
recent related work by Strickland on {\em Tambara functors}
\cite{Strickland:Tambara_functors} and that Schwede has independently
arrived at a notion of global power functor very similar to ours. 

The paper is organized as follows:
systematically using the language of groupoids, we start with an
exposition of the theory of {\em global Mackey and Green 
  functors}. These are discussed in
\cite{Webb:A_guide}, where Webb attributes the definition to Bouc
\cite{Bouc:Foncteurs}, referencing also older work of Symonds
\cite{Symonds:A_splitting}. What is different in our setup is the groupoid
point of view. This is by no account a new idea, see for instance
\cite{Strickland:K(N)-local} and especially 
\cite{Panchadcharam:Categories_of_Mackey_functors}. Since we do,
however, heavily use this formulation, we felt 
it worth to spell out the axioms and to give a full comparison to the definition of
global Mackey functor found in \cite{Webb:A_guide}. 

We then proceed to define global power functors. 
Given such a global power functor, one can study rings with similar
operations. For instance, special $\lambda$-rings are rings with 
operations similar to those on the representation rings $R(G)$. 
The precise concept is that of a $\tau$-ring, formulated by
Hoffman, who proved that a $\tau$ ring with respect to $R$ is the same
thing as a special $\lambda$-ring \cite{Hoffman:tau-rings}. 
Retelling Hoffman's story with $R$ replaced by a different global
power functor $M$, we arrive at a theory of rings with operations
parametrized by elements of 
$\bigoplus\limits^\wedge_{n\geq 0}M(S_n)$ or, depending on some
technical properties of $M$, by elements of
$\bigoplus\limits^\wedge_{n\geq 0}M(S_n)^*$.
These {\em $\tau$-rings with respect to $M$} turn out to be
non-special $\lambda$-rings with additional structure. 

A key player in \cite{Hoffman:tau-rings} is the
Schur-Weyl isomorphism
\begin{eqnarray}\label{eq:Schur-Weyl-isomorphism}
  \bigoplus_{n\geq 0}^\wedge R(S_n) &
  \xrightarrow{\phantom{xx}\cong\phantom{xx}} & \lim_{m\in \mathbb N}\,R(U(m)),
\end{eqnarray}
defined and studied via $\tau$-operations, and we will discuss some attempts
to generalize \eqref{eq:Schur-Weyl-isomorphism}.

The example of the Burnside rings has been studied extensively, the
resulting `$\tau$-rings' have become known as $\beta$-rings, see in particular
\cite{Rymer:Power_operations}, but also
\cite{Boorman:S-operations},
\cite{Ochoa:Outer_plethysm}, \cite{Morris:Wensley:Adams_operations},
\cite{Vallejo:The_free_beta-ring}, \cite{Guillot:Adams_operations}. 
Other examples include $n$-special $\lambda$-rings, involving
class functions on $n$-tuples of commuting elements, `subgroup class
functions', where permutation representations have their characters, and an elliptic
picture, where we suggest the construction of an elliptic analogue of
the Schur-Weyl map \eqref{eq:Schur-Weyl-isomorphism}, taking values in
Looijenga's ring of symmetric theta functions \cite{Looijenga:Root_systems}. 
Rezk has independently developed a
theory of {\em `elliptic $\lambda$-rings'}, and we make contact with his work in \ref{sec:Rezk}. 

A variation where
$R(S_n)$ is replaced with $R^-(S_n)$, the Grothendieck group of
projective representations with Schur cocycle, gives rise to the
notion of super $\lambda$-ring, fitting Sergeev-Yamaguchi duality into
Hoffman's framework for \eqref{eq:Schur-Weyl-isomorphism}.

%THEOREM 1.2 IN HOFFMAN!!!
%Make contact with Symcat, James Borger, 

\subsubsection*{Acknowledgments}
Many many thanks go to Mikhail Kapranov for suggesting this project. A
first draft of this paper, dating back to 2008, was written in
collaboration with Mikhail, 
and many of the ideas presented here are his. A warm thanks also goes to
Charles Rezk for generously sharing his ideas and his unpublished
work, and likewise to Jack Morava and Rekha Santhanam.
I like to thank Arun Ram for helpful conversations and, in particular for bringing my
attention to the reference \cite{Jozefiak:Relating_spin}. I would like
to thank Stefan Schwede and Eugene Lerman for
helpful conversations. Finally I would like to thank Elena Ganter, who
helped with some of the typing. 
%
%Those readers who are familiar with homotopical algebra should note
%that there is a faster proof of the previous proposition, which relies
%on the fact that maps of $G$-sets are automatically fibrations in the
%Anderson-Strickland model structure on $\Gpdf$. 
\section{Globally defined Mackey functors}\label{global-Mackey-Sec}
\subsection{Finite groupoids} 
For a detailed introduction to groupoids see 
\cite{Moerdijk:Orbifolds_as_groupoids} or \cite{Strickland:K(N)-local}.
A finite groupoid is a
category $\uG$ with finitely many objects and finitely many morphisms and 
such that each morphism is invertible. 
We will write 
$G_0$ for the set of objects of $\uG$ and $G_1$ for the set of
morphisms. To avoid confusion with maps between groupoids, we
will refer to elements of $G_1$ as {\em arrows}. 
Let $g\in G_1$ be an arrow. We will
write $s(g)$ for the source and $t(g)$ for the target of $g$.
Given an object $x\in
G_0$, we will refer to the automorphism group of $x$ as the 
{\em stabilizer} $\on{Stab}(x)$ of $x$.
We write $[\uG]$ for the set of isomorphism classes of $\uG$.
If $G$ is a finite group, we will write $G$ also for the groupoid with
$G_0=pt$ and $G_1 = G$. 
We choose this over the more common notation $\pt/\!\!/ G$, 
since Mackey functors are traditionally applied to groups. So, for instance, the
representation ring of $G$ remains $R(G)$, rather than $R(\pt/\!\!/
G)$.

A {\em map of groupoids} $\map f{\ul H}{\uG}$ is a functor. 
We will write $\Gpdf$ for the category of finite groupoids and maps of
groupoids. 
One can (and should) view $\Gpdf$ as 
$2$-category with the natural isomorphisms as 2-morphisms, but
in this paper we will not emphasize the 2-categorical point of view.
A map $f$ is
an {\em equivalence} if it is an equivalence of categories, and $f$ is
called {\em faithful} if it is injective on stabilizers.

Fix a finite group $G$. Let $\GSetsf$ be the category of
(left) $G$-sets. We write 
\begin{eqnarray*}
{\GSetsf}&\longrightarrow &{\Gpdf}\\
{X}&\longmapsto&{G\ltimes (-)}  
\end{eqnarray*}
for the functor that sends a $G$-set $X$ to its translation
groupoid. To fix notation, we say that $G\ltimes X$ has objects $X$
and an arrow
$$gx\leftarrow x$$ 
for each pair $(g,x)$ in $G\times X$.
\begin{Exa}
The translation groupoid of the one point space is isomorphic to the
group $G$ itself. More generally, let $H$ be a subgroup of $G$, and
consider the left G-set $G/H$. The translation groupoid of $G/H$ is
canonically equivalent to the group $H$.  
\end{Exa}

\begin{Def}[Inertia groupoids]
  Let $\uG$ be a groupoid. The {\em inertia groupoid} 
  $$\mI(\uG) = \hom(\ZZ,\uG)$$ (sometimes also
  called {\em loop groupoid})  
  of $\uG$ has as objects the automorphisms of $\uG$. An arrow in $\mI(\uG)$
  from $g\in\Stab(x)$ to $g'\in\Stab(y)$ is an arrow $h\in G_1$ from $x$ to
  $y$ satisfying $g' = hg h\inv$. Composition is defined in the
  obvious way.
\end{Def}
\begin{Def}[Strict pullbacks] Consider a diagram of groupoids
  \begin{equation}
    \label{diagram-Eqn}
    \uK\stackrel\delta\longrightarrow \uG\stackrel\gamma\longleftarrow\uH.    
  \end{equation}
  The pullback groupoid $\uP$ of \eqref{diagram-Eqn} has object set
  $$P_0= K_0\times_{G_0}H_0,$$
%\{(k,h)\in K_0\times H_0\mid\delta_0(k)=\gamma_0(h)\},$$
  and similarly for arrows and the source, target and composition maps.
%  $$P_1=\{(k,h)\in K_1\times H_1\mid\delta_1(k)=\gamma_1(h)\}.$$
\end{Def}
  We have a commuting diagram
  $$
    \xymatrix{\uP\ar[r]^\alpha\ar[d]_\beta&\uH\ar[d]^\gamma\\
    \uK\ar[r]_\delta&\uG,
    }
  $$ 
  and $\uP$ is universal with respect to the
  property that it fits into this diagram.
\begin{Def}
  A diagram that is isomorphic to one obtained
  in this fashion is called a {\em pullback diagram}.  
\end{Def}
\begin{Def}[Fibered products]
  The {\em homotopy pullback} or {\em fibered product} $\uF$ of
  \eqref{diagram-Eqn} 
  has objects 
  $$
    F_0 := \{(y,g,z)\mid y\in K_0, \medskip z\in H_0,\medskip \map
    g{\delta(y)}{\gamma(z)}\} 
  $$
  and arrows 
  $$
    \xymatrix{y\ar[d]_k&
      \delta(y)\ar[r]^g\ar[d]_{\delta(k)}& \gamma(z)\ar[d]^{\gamma(h)} 
      &z\ar[d]^h \\ 
     y'&
      \delta(y')\ar[r]_{g'}& \gamma(z') &
      z' .}
  $$
  Composition of arrows is defined in the obvious way.
\end{Def}
The projections $\map\alpha\uF\uH$ and $\map\beta\uF\uK$ make the
{\em homotopy pullback diagram}
  $$
    \xymatrix{\uF\ar[r]^\alpha\ar[d]_\beta&\uH\ar[d]^\gamma\\
    \uK\ar[r]_\delta&\uG,
    }
  $$ 
commute up to a natural transformation
$$
  \gamma\circ\alpha \stackrel{\cong\phantom{..}}\Longrightarrow \delta\circ\beta,
$$
and the triple $(\uF,\alpha,\beta)$ is universal with respect to that
property.
\begin{Def}
  We will refer to any diagram that is equivalent to a diagram obtained
  in this fashion as a {\em homotopy pullback diagram}.  
\end{Def}
Note that the pullback $\uP$ is contained in $\uF$ as the full subgroupoid whose
objects are of the form $(y,1,z)$.
Fibered products are well-behaved under
equivalences, whereas pullbacks are not. 
\begin{Exa}[{See
    \cite[Chap.0]{Panchadcharam:Categories_of_Mackey_functors}}]\label{double-coset-Exa}  
  Assume that $K$, $G$, and $H$ are finite groups and
  that $\delta$ and $\gamma$ are injective maps of groups. %Then we have
  %$P_0 = G$ and an arrow from $g$ to $g'$ consists of a pair $(k,h)$
  %such that $$g' = hgk\inv.$$
  %Hence 
  Then the isomorphism classes of the fibered product $\uF = K\times_G
  H$ are in one to one 
  correspondence with the double cosets $H\backslash G/K$. For $g\in
  G$ the stabilizer of $g$ in $\uF$ is isomorphic to $$H\cap (gKg\inv).$$
  %\begin{eqnarray*}
  %  \on{Stab}(g) & = & \{(k,h)\mid hg=gk\}\\  
  %   & = & \{(k,h)\mid h=gkg\inv\}\\
  %& \cong & H\cap (gKg\inv).
  %\end{eqnarray*}
  Under this identification the map $\map\alpha\uF
  H$ becomes the inclusion of $H\cap (gKg\inv)$ in $H$. Similarly,
  $\beta$ becomes the inclusion of $(g\inv H g)\cap K$ in $K$, and the
  isomorphism that connects the source of $\alpha$ with the source of
  $\beta$ is conjugation by $g$. 
\end{Exa}

\begin{Rem} 
  The functor that sends
  \eqref{diagram-Eqn} to its pullback square is the
  right Kan extension along the map of underlying diagrams. 
  With an appropriate model structure on $\Gpdf$
  \cite[6.12]{Strickland:K(N)-local},
  the homotopy pullback becomes the right homotopy Kan
  extension along the same map of diagrams. In other words, forming the
  homotopy pullback (square) is the right derived functor of forming the
  pullback (square).
%
%  Another useful point of view on homotopy pullbacks can be found in
%  \cite[Chap.0]{Panchadcharam:thesis}, where $\uF$ is interpreted as the comma
%  category $\gamma\downarrow\delta$. 
\end{Rem}

\begin{Prop}\label{G-set-pullback-Prop}
  The functor $G\ltimes (-)$ sends pullback diagrams in $\GSetsf$ to
  homotopy pullback diagrams in $\Gpdf$.
\end{Prop}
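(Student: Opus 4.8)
The plan is to exhibit the canonical comparison functor from the translation groupoid of the set-theoretic pullback into the fibered product of the image cospan, and to show that it is an equivalence of groupoids; the conclusion then follows from the fact, noted above, that fibered products are well-behaved under equivalences. Concretely, present the given pullback square in $\GSetsf$ as arising from a cospan $\map uXZ$, $\map vYZ$ with pullback $W=X\times_Z Y=\{(x,y)\mid u(x)=v(y)\}$ carrying the diagonal $G$-action, and apply $G\ltimes(-)$ to obtain the square with corners $G\ltimes W$, $G\ltimes X$, $G\ltimes Y$, $G\ltimes Z$. First I would check that the \emph{strict} pullback $\uP$ of $G\ltimes X\to G\ltimes Z\leftarrow G\ltimes Y$ is already $G\ltimes W$: an arrow of $\uP$ is a pair of arrows, one in $G\ltimes X$ and one in $G\ltimes Y$, with the same label and the same image source in $G\ltimes Z$, and unwinding the definition shows this datum is a single $\nu\in G$ acting diagonally on a point of $W$. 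Thus $\uP=G\ltimes W$ is the full subgroupoid of the fibered product $\uF$ on the objects $(x,1,y)$, exactly as recorded above.

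Next I would verify that the inclusion $G\ltimes W\hookrightarrow\uF$ is an equivalence. Here an object of $\uF$ is a triple $(x,\sigma,y)$ with $\sigma\in G$ and $\sigma\cdot u(x)=v(y)$, and an arrow $(x,\sigma,y)\to(x',\sigma',y')$ is a pair $(\rho,\mu)\in G\times G$ with $\rho x=x'$, $\mu y=y'$ and $\sigma'\rho=\mu\sigma$. For essential surjectivity — the crucial step, and the one using equivariance of $u$ — I would observe that $(x,\sigma,y)$ is isomorphic in $\uF$ to $(\sigma x,1,y)$ via the arrow $(\sigma,1)$, noting that $(\sigma x,y)\in W$ precisely because $u(\sigma x)=\sigma\,u(x)=v(y)$.

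For fullness and faithfulness I would use that an arrow $(x,1,y)\to(x',1,y')$ forces $\rho=\mu$ (from $1\cdot\rho=\mu\cdot1$), so it is given by a single $\nu=\rho=\mu\in G$ with $\nu x=x'$ and $\nu y=y'$, which is exactly an arrow of $G\ltimes W$; the assignment is a bijection on hom-sets. Hence the inclusion is an equivalence. It remains to confirm compatibility with the rest of the square: the two projections $\uF\to G\ltimes X$ and $\uF\to G\ltimes Y$ restrict along the inclusion to the maps induced by the two legs of the set-theoretic pullback, and the structural natural isomorphism $\gamma\circ\alpha\cong\delta\circ\beta$ of $\uF$ restricts to the identity on $\uP$ (there $\sigma=1$), matching the strict commutativity of the image square. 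Therefore the square produced by $G\ltimes(-)$ is equivalent, as a square, to the fibered-product square of $G\ltimes X\to G\ltimes Z\leftarrow G\ltimes Y$, and so is a homotopy pullback diagram.

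I expect the only genuine subtlety to be the essential-surjectivity step: it is exactly there that $G$-equivariance of the two legs lets one absorb the connecting arrow $\sigma$ of a fibered-product object into the $G$-action, collapsing the homotopy pullback onto the strict one. The remaining pieces — identifying $\uP$ with $G\ltimes W$, full faithfulness, and compatibility with the projections and the structural isomorphism — are routine bookkeeping with the definitions of the translation groupoid and of the fibered product.
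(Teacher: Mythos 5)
Your proof is correct and follows essentially the same route as the paper's: identify the strict pullback of the image cospan with $G\ltimes(X\times_Z Y)$, then show the inclusion into the fibered product is an equivalence, with essential surjectivity coming from absorbing the connecting arrow $\sigma$ into the $G$-action on one leg. The only real difference is that you verify full faithfulness explicitly, whereas the paper gets it for free from its earlier observation that the strict pullback sits inside the fibered product as a full subgroupoid on the objects $(y,1,z)$.
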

\begin{Pf}{}
In fact, the functor $G\ltimes(-)$ preserves pullback squares.
We claim that in this case the inclusion map $\map\iota{\uP}\uF$ is an
equivalence of groupoids. 
  We need to show that $\iota$ is essentially surjective. Pick an
  object $(y,g,z)$ of $\uF$. Then the arrow
$$
  \xymatrix{gy\ar[d]_{g\inv}&
    \delta(gy)\ar[r]^{1}\ar[d]_{g\inv}& \gamma(z)\ar[d]^{1} 
    &z\ar[d]^{1} \\ 
   y&
    \delta(y)\ar[r]_{g}& \gamma(z) &
    z }
$$
gives the desired isomorphism from an object in the image of $\iota$.
\end{Pf}
\subsection{Mackey functors}
%\begin{Def}\label{global-Mackey-Def}
  Let $\mA$ be an additive category. 
  Let $M$ be a pair
  $M=(M_*,M^*)$ of functors 
  $$
    \Gpdf\longrightarrow \mA,
  $$
  agreeing on objects, where $M_*$ is covariant and $M^*$ is contravariant.
  We write 
  $$
    M(\uG):=M_*(\uG) = M^*(\uG)  
  $$
  for the effect of $M$ on objects and 
  $$\phi_*:= M_*(\phi)\quad \text{  and  }\quad \phi^* := M^*(\phi)$$
  for its effect on morphisms.
  We call $M$ a {\em globally defined Mackey functor} if
  the following axioms are satisfied:
  \begin{description}
  \item[Coproducts:] $M_*$ preserves coproducts, and $M(\emptyset) = 0$.

\bigskip
  \item[Natural isomorphisms:] If $\phi$ and $\psi$ are two naturally
    isomorphic maps of
    groupoids $\uG\to\uH$, then we have $\phi_*=\psi_*$ and
    $\phi^*=\psi^*$.

\bigskip
   \item[Equivalences:] If $\simmap\phi\uG\uH$ is an equivalence of
     groupoids, then $\phi_*$ and $\phi^*$ are inverse
     isomorphisms.

\bigskip
   \item[Fibered Products:] For any diagram of groupoids
     $$
       \xymatrix{\uF\ar[r]^\alpha\ar[d]_\beta&\uH\ar[d]^\gamma\\
       \uK\ar[r]_\delta&\uG,
       }
     $$ 
     where $\uF$ is the {\em fibered product}
     of $\uH$ and $\uK$ over $\uG$, we have the {\em push-pull identity} 
     $$
       \delta^*\circ \gamma_* = \beta_*\circ \alpha^*.
     $$ 
  \end{description} 
%\end{Def}

A {\em natural transformation} $\map\eta MN$ of Mackey functors is a family of maps
\begin{eqnarray*}
  \eta_{\uG}\negmedspace : M(\uG) & \longrightarrow & N(\uG)
  \quad\quad \uG\in ob(Gpd)
\end{eqnarray*}
that is natural with respect to both, the co- and the contravariant
structure of $M$ and $N$.

\medskip
The following lemma is an immediate consequence of 
Proposition \ref{G-set-pullback-Prop}. 
\begin{Lem}
  Let $M$ be a globally defined Mackey functor with values in
  $\mA$. Then the composite
  $$M\circ G\ltimes(-)%\negmedspace 
  :\GSetsf\longrightarrow \mA$$
  is a Mackey functor for $G$ in the sense of Dress.
\end{Lem}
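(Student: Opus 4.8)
The plan is to define $N := M\circ G\ltimes(-)$, writing $N_* := M_*\circ G\ltimes(-)$ and $N^* := M^*\circ G\ltimes(-)$, and to deduce the three defining properties of a Mackey functor for $G$ in the sense of Dress—the underlying pair of functors, additivity, and the double-coset (push-pull) formula—from the corresponding axioms for $M$. Since $\map{G\ltimes(-)}{\GSetsf}{\Gpdf}$ is a functor and $M_*$, $M^*$ are respectively covariant and contravariant, the composites $N_*$ and $N^*$ are automatically a covariant and a contravariant functor on $\GSetsf$ that agree on objects; this supplies the underlying pair for free.

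For additivity I would first record that $G\ltimes(-)$ sends the empty $G$-set to the empty groupoid and disjoint unions to coproducts: the translation groupoid of $X\sqcup Y$ has object set $X\sqcup Y$, and since $G$ acts componentwise there are no arrows between the two summands, so $G\ltimes(X\sqcup Y)=(G\ltimes X)\sqcup(G\ltimes Y)$. Postcomposing with the Coproducts axiom for $M$ then yields $N(X\sqcup Y)\cong N(X)\oplus N(Y)$ and $N(\emptyset)=0$, which is precisely Dress's additivity.

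The substance of the lemma is the push-pull identity. Starting from a pullback square of $G$-sets, Proposition~\ref{G-set-pullback-Prop} and its proof tell me that $G\ltimes(-)$ produces the strict pullback square of groupoids
$$\xymatrix{\uP\ar[r]^{\alpha_P}\ar[d]_{\beta_P}&\uH\ar[d]^\gamma\\ \uK\ar[r]_\delta&\uG,}$$
and that the inclusion $\map\iota\uP\uF$ of $\uP$ as the full subgroupoid of the fibered product on the objects $(y,1,z)$ is an equivalence. Because $\alpha_P$ and $\beta_P$ are the restrictions to $\uP$ of the projections $\alpha,\beta$ of $\uF$, we have $\alpha_P=\alpha\circ\iota$ and $\beta_P=\beta\circ\iota$, so functoriality gives
$$(\beta_P)_*\circ(\alpha_P)^* = \beta_*\circ\iota_*\circ\iota^*\circ\alpha^*.$$
By the Equivalences axiom $\iota_*$ and $\iota^*$ are mutually inverse, hence $\iota_*\circ\iota^*=\id$ and $(\beta_P)_*\circ(\alpha_P)^*=\beta_*\circ\alpha^*$. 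Finally the Fibered Products axiom for $M$, applied to $\uF$, rewrites the right-hand side as $\delta^*\circ\gamma_*$; transported back through $G\ltimes(-)$, the resulting equality $\delta^*\circ\gamma_* = (\beta_P)_*\circ(\alpha_P)^*$ is exactly the Mackey push-pull formula for the original $G$-set square.

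The step needing the most care is the bookkeeping of the previous paragraph. The axioms for $M$ are stated in terms of the fibered product $\uF$, whereas $G\ltimes(-)$, preserving strict pullbacks, naturally outputs $\uP$; bridging this gap is exactly the role of the Equivalences axiom (and, should the factorizations $\alpha_P=\alpha\circ\iota$, $\beta_P=\beta\circ\iota$ only hold up to a natural isomorphism rather than strictly, of the Natural isomorphisms axiom). Once this identification is made, everything else reduces to the functoriality already built into $N$, and the lemma follows.
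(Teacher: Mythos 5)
Your proposal is correct and follows essentially the same route as the paper: the paper derives the lemma as an immediate consequence of Proposition \ref{G-set-pullback-Prop}, and your argument simply spells out what that entails—namely that $G\ltimes(-)$ preserves coproducts and sends pullback squares of $G$-sets to squares equivalent (via the inclusion $\iota\colon\uP\to\uF$) to fibered-product squares, so that the Coproducts, Equivalences, and Fibered Products axioms for $M$ yield Dress's axioms for $M\circ G\ltimes(-)$. The bookkeeping with $\iota_*\circ\iota^*=\id$ is exactly the implicit content of the word ``immediate'' in the paper.
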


In the appendix, we prove that
our definition becomes equivalent to 
Webb's definition of globally defined Mackey functor
\cite[8]{Webb:A_guide} if one adds one more axiom to the above list: 
\begin{description} 
\item[Surjections axiom:] Let $\map\phi GH$ be a surjective map of
  groups. Then
  $$\phi_*\phi^* = \id_{M(H)}.$$ 
\end{description} 

\subsection{Consequences of the axioms}
\begin{enumerate}
\item Let $G$ be a finite group and let $H$ be a subgroup of $G$.% Let
%$\ul{G/H}$ denote the translation groupoid of the $G$-set $G/H$. Then
%there is a 
Then the canonical equivalence of groupoids
$$
  H \stackrel{\sim}{\longrightarrow} {\underline{G/H}},
$$
induces an isomorphism
$$
  M(H)\stackrel\cong\longrightarrow M(\ul{G/H}).
$$

\bigskip
\item Let $G$ be a finite group and let $\map{c^g}GG$ be an inner
  automorphism of $G$. Then $c^g$ is naturally isomorphic to
  $\id_G$, and hence $$(c^g)_* = \id_{M(G)} = (c^g)^*.$$

\bigskip 
%\item Let $G$ be a finite group and consider a pullback square in the
%  category of finite $G$-sets. Then the push-pull identity holds for
%  $M$ applied to this pullback square.
%
%\bigskip
\item 
Let $\simmap\alpha\uG\uH$ be an equivalence of groupoids with
quasi-inverse

\noindent
$\simmap\beta\uH\uG$. Then the composite $\alpha\circ\beta$ is
naturally isomorphic to $\id_\uH$, and we get
$$
  \alpha_* = \(\beta_*\)\inv = \beta^*.
$$

\bigskip
\item Let $\uG = \uH\sqcup\uK$, and let $\map{\ul\alpha}\uH\uG$ and
  $\map{\ul\beta}\uK\uG$ be the inclusions. 
  Then $\alpha^*$ and $\beta^*$ are the projections onto the
  summands of 
  $$M(\uG)\cong M(\uH)\oplus M(\uK),$$  
  while $\alpha_*$ and $\beta_*$ are the inclusions of the summands.
\end{enumerate}

\medskip
\begin{Pf}{of (4)}
  The fibred product
  $\uK\times_\uG\uH$ is empty, so we get
  $$\beta^*\alpha_* = 0 \quad\text{and}\quad\alpha^*\beta_* = 0.$$
  Further, we have $\uH\times_\uG\uH \cong \uH$ and similarly for
  $\uK$, implying $$\alpha^*\alpha_* =
  \id_{M(\uH)} \quad\text{and}\quad \beta^*\beta_* = \id_{M(\uK)}.$$    
\end{Pf}{}
%\begin{Cor}
%  Let $G$ be a finite group, and consider the composite of $M$ with
%  the functor
%  \begin{eqnarray*}
%     \GSetsf &\to& \Gpdf\\
%          X&\mapsto&\ul X
%  \end{eqnarray*}
%  sending a $G$-set to its translation groupoid. This composite is a
%  Mackey functor for $G$ in the sense of Dress \cite{Dress}. 
%\end{Cor}
%
\subsubsection{Variations}
In \cite{Webb:A_guide}, Webb describes
various variations of the definition of global Mackey functor. His
definition depends on two classes of finite groups, which he 
denotes $\mathcal X$ and $\mathcal Y$.
Above we
have only formulated the strongest case, where both classes contain all
finite groups.
One can adapt our definition to other cases
by imposing the appropriate conditions on stabilizers. 
For instance, for the case
where $\mathcal Y$ only contains the trivial
group, we can weaken our definition by requiring
 $M_*(\alpha)$ to be defined only for faithful $\alpha$.
\subsection{Examples}
\subsubsection{Burnside rings} 
  Let $Sets^{f}$ denote the category of finite sets. Then
  the category of {\em $\uG$-sets} (or {\em `sheaves in finite sets'} over $\uG$) is the
  functor category
  \begin{eqnarray*}
    Sh(\uG) & := & \mF\! un(\uG,Sets^{f}).
  \end{eqnarray*}

  Let $\map\phi\uG\uH$ be a map of finite groupoids.
  Then we have an adjoint functor pair
  \begin{eqnarray*}
    \phi_* : Sh(\uG) & \rightleftharpoons & Sh(\uH)
    : \phi\inv,
  \end{eqnarray*}
  with 
  \begin{eqnarray*}
    \phi\inv\mF = \mF\circ \phi & \text{and} & \phi_*\mF = RKan_\phi\mF
  \end{eqnarray*}
  (left Kan-extension, compare \cite[6.3]{Moerdijk:Orbifolds_as_groupoids}).
  The Burnside ring of $\uG$ is the Grothendieck group
  \begin{eqnarray*}
    A(\uG) & = & \(Sh(\uG)/\cong\medspace ,\medspace\sqcup\)^{gp}.
  \end{eqnarray*}
  The discussion in \cite{Panchadcharam:Categories_of_Mackey_functors} implies that
  $A$ is a global Mackey functor satisfying the surjectivity axiom. 
  \begin{Exa}
    If $G$ is a finite group, then $Sh(G)$ is naturally identified
    with the category of finite left-$G$-sets,
    and $A(G)$ is the Burnside ring of $G$.
  \end{Exa}
%
%\subsubsection{Algebraic K-theory of {$\mathbb{Z}[G]$}} What to put here?
%
\subsubsection{Group cohomology} The Borel construction functor (c.f.\
\cite{Moerdijk:Orbifolds_as_groupoids}) 
\begin{eqnarray*}
   {\on{Borel}}\negmedspace:{\Gpdf}&\longrightarrow &{\mathcal{S}}
\end{eqnarray*}
  from the category of (finite) groupoids into the stable homotopy
  category has transfers along faithful maps of groupoids, making
  \begin{eqnarray*}
    \uG&\longmapsto& H^*(B\uG;\ZZ)
  \end{eqnarray*}
  a global Mackey functor, where the $\phi_*$ are only defined for
  faithful $\phi$. 
  The degree zero part 
  \begin{eqnarray*}
    H^0(B\uG;\ZZ) &\cong & \ZZ^{[\uG]}
  \end{eqnarray*}
  counts the number of isomorphism classes of $\uG$. 
  \begin{Lem}
  Let $f$ be a
  $\ZZ$-valued function on $[\uG]$, and let $\map\phi\uG\uH$ be a
  faithful map. Then $$\longmap{\phi_*f}{[\uH]}\ZZ$$ satisfies
  \begin{eqnarray}
    \label{eq:H^0-transfer}
    \frac{(\phi_*f)(y)}{|aut_{\uH}(y)|} & = &    
    \sum_{[x]\mapsto [y]} \frac{f(x)}{|aut_{\uG}(x)|}.
  \end{eqnarray}
  \end{Lem}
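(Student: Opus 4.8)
The plan is to reduce the statement, via the Mackey axioms, to a single finite covering on each connected component of the classifying spaces, and then to compute the degree-zero transfer as the familiar sum over the fiber.

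First I would pass to skeleta. Recall from the definition of the inertia/stabilizer that $\Stab(x)=\mathrm{aut}_{\uG}(x)$, so every finite groupoid is equivalent to a disjoint union of one-object groupoids, $\uG\simeq\coprod_{[x]\in[\uG]}\Stab(x)$ and $\uH\simeq\coprod_{[y]\in[\uH]}\Stab(y)$. Under the identification $H^0(B\uG;\ZZ)\cong\ZZ^{[\uG]}$, the number $f(x)$ is literally the (locally constant) value of the cohomology class on the connected component $B\Stab(x)$ of $B\uG$. Using the \textbf{Equivalences} axiom I may transport $\phi_*$ and $\phi^*$ along these equivalences, and using the \textbf{Natural isomorphisms} axiom I may, after choosing for each $x$ an isomorphism $\phi(x)\cong y$, replace $\phi$ by the functor between skeleta it induces. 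Since $\phi$ is faithful, this induced map is injective on each stabilizer, so the skeletal model of $\phi$ is a coproduct of injective group homomorphisms $K:=\Stab(x)\hookrightarrow\Stab(y)=:L$, one for each $[x]$ with $[x]\mapsto[y]$. By the \textbf{Coproducts} axiom, $M_*(\phi)$ is then the direct sum of the transfers associated to these individual inclusions.

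On classifying spaces an inclusion $K\hookrightarrow L$ induces the covering $BK\to BL$ with fiber $L/K$, of degree $[L:K]=|\Stab(y)|/|\Stab(x)|$. The one genuinely topological input is that the transfer postulated for the Borel construction restricts, on each component, to the covering (Becker–Gottlieb) transfer, whose effect on $H^0$ is to sum a class over the points of the fiber; on a connected cover of degree $d$ this sends the constant $c$ to $d\cdot c$. Summing over the components of $B\uG$ lying over the $[y]$-component of $B\uH$ — which by construction are exactly the $[x]$ with $[x]\mapsto[y]$ — gives
\[
  (\phi_* f)(y)\;=\;\sum_{[x]\mapsto[y]}\frac{|\Stab(y)|}{|\Stab(x)|}\,f(x),
\]
and dividing by $|\Stab(y)|=|aut_{\uH}(y)|$ yields \eqref{eq:H^0-transfer}.

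The main obstacle is precisely that middle step: identifying the abstractly specified transfer with the covering transfer and verifying that the latter acts on $H^0$ by summation over the fiber; everything else is bookkeeping with the axioms. As a purely algebraic sanity check (and an alternative that avoids reproving the covering formula from scratch) one can instead pin down $\phi^*\phi_*$ through the \textbf{Fibered Products} push–pull identity applied to $\phi$ against itself: by Example~\ref{double-coset-Exa} this expresses $\phi^*\phi_*(1)$ as a sum over double cosets $K\backslash L/K$ of transfers along the inclusions $K\cap g^{-1}Kg\hookrightarrow K$, and since $\phi^*$ is the identity on $H^0$ of connected spaces, this recovers $\phi_*(1)=[L:K]$ via the counting identity $\sum_{g\in K\backslash L/K}[K:K\cap g^{-1}Kg]=[L:K]$. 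I would present the covering-transfer computation as the primary argument and record the double-coset identity as the consistency check.
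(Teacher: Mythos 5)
Your proof is correct, but it takes a genuinely different route from the paper's. The paper's proof is a one\-/line application of the \textbf{Fibered Products} axiom: it forms the homotopy pullback of $\pt\xrightarrow{\,y\,}\uH\xleftarrow{\,\phi\,}\uG$, which by faithfulness of $\phi$ is the \emph{discrete} groupoid $\coprod_{[x]\mapsto[y]}aut(y)/aut(x)$, and then the push--pull identity $y^*\phi_* = \pi_*\iota^*$ gives $(\phi_*f)(y)=\sum_{[x]\mapsto[y]}|aut(y)|/|aut(x)|\,f(x)$, because the transfer from a finite discrete groupoid to the point is summation purely by the \textbf{Coproducts} axiom. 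In other words, the paper never opens up the topological definition of the transfer at all: the formula \eqref{eq:H^0-transfer} is forced by the Mackey axioms alone, and so holds for \emph{any} system of transfers satisfying them. Your argument instead passes to skeleta and imports the topological fact that the Borel-construction transfer along a group inclusion $K\hookrightarrow L$ is the covering (Becker--Gottlieb) transfer of $BK\to BL$, which on $H^0$ sums over the fiber. That identification is true and standard, and your bookkeeping with the \textbf{Coproducts}, \textbf{Equivalences} and \textbf{Natural isomorphisms} axioms is sound, so the proof goes through; what it costs you is reliance on an input the paper never pins down (it only asserts the existence of transfers making $H^*(B-;\ZZ)$ a global Mackey functor), and what it buys you is a concrete computation of what that specific transfer does, rather than a deduction that the axioms leave it no choice. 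One caution about your ``consistency check'': the double-coset computation of $\phi^*\phi_*(1)$ via Example \ref{double-coset-Exa} only recovers $\phi_*(1)=[L:K]$ if you already know that the transfers along the inclusions $K\cap g^{-1}Kg\hookrightarrow K$ send $1$ to the index, which is the very kind of statement being proved; so it is fine as the consistency check you call it, but it could not replace your primary argument. Had you instead applied push--pull against the inclusion of a single point of $\uH$, the circularity disappears --- and that is exactly the paper's proof.
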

  \begin{proof}
    This follows from the homotopy pullback square
    $$%%%%%
      \xymatrix{
      {\coprod\limits_{[x]\mapsto[y]}aut(y)/aut(x)}\ar[rr]\ar[d]&&\uG\ar[d]\\
      \pt\ar[rr]^y&&\uH.
      }
    $$
  \end{proof}
  We may replace $\ZZ$ with any other abelian ring $R$. If $R$
  is a $\QQ$-algebra then $H^0(-;R)$ possesses transfers along all maps,
  defined by \eqref{eq:H^0-transfer}.
  \begin{Exa}
    Let $G$ be a finite group. Then
    $BG$ is a classifying space of $G$, and
    $H^*(BG;\ZZ)$ is group cohomology 
    of $G$ with trivial coefficients.     
  \end{Exa}
  \begin{Exa}
    More generally, let $X$ be a finite $G$-set. Then
    $$B(G\ltimes X) \simeq EG\times_GX,$$
    and $H^*(EG\times_GX;\ZZ)$
    is the Borel equivariant cohomology of $X$.     
  \end{Exa}
\subsubsection{Representation rings}\label{Representation-rings-Sec} 
  We write $\on{Vect^{f}_\CC}$ for the category of finite dimensional
  complex vector spaces and consider the functor category
  \begin{eqnarray*}
    Rep_\CC(\uG) & = & \mF\!un\(\uG,\on{Vect^{f}_\CC}\)
  \end{eqnarray*}
  of complex $\uG$ representations.
  Let $\map\phi\uG\uH$ be a map of groupoids. Then there is an adjoint
  functor pair 
  \begin{eqnarray*}
     \ind\phi{}:Rep_\CC(\uG) & \leftrightharpoons & Rep_\CC(\uH) : \res{}\phi,
  \end{eqnarray*}
  where $\res{}\phi$ is precomposition with $\phi$, while
  $\ind\phi{}$ is left Kan extension along $\phi$. 
%  For a fixed object $y\in \uH$ we have
%  $$\(\ind\alpha{}(V)\)_y = \mathop{\on{colim}}\limits_{\uG\downarrow
%    y}V.$$
%  Here $\uG\downarrow y$ denotes the category with objects
%  \marginpar{fill this in.}
  We write
  $R(\uG)$ for the Grothendieck group of the category of
  representations
  of $\uG$.
  The maps induced by
  $\on{ind}{}{}$ and $\on{res}{}{}$ make $R$ into a global Mackey
  functor satisfying the surjectivity axiom\footnote{A good reference
    for the groupoid formulation of this fact is 
  \cite[{Prop.\ 0.0.1}]{Panchadcharam:Categories_of_Mackey_functors}.}.
  For a fixed groupoid $\uG$, we also have the 
  $\uG$-Mackey functor sending $\uH$ to the $H^2(B\uG;U(1))$-graded
  ring of projective representations,
  \begin{eqnarray*}
    R^*(\uH) & = & \bigoplus_\alpha R^{\phi^*(\alpha)}(\uH).
  \end{eqnarray*}
  \begin{Exa}
    If $\phi$ is an inclusion of finite groups, then $\ind\alpha{}$ is
    the familiar induced representation functor.     
  \end{Exa}
  \begin{Exa}
    If $\phi$ is
    the unique map from a finite group $G$ to the trivial group, then
    $\ind\phi{}$ is the inner product with one, i.e., $\ind\phi{}\rho$
    is the trivial summand of $\rho$.
  \end{Exa}
  \begin{Exa}
    Let $G$ be a finite group acting on a finite set $X$, and let
    $\uG=G\ltimes X$ be the corresponding translation groupoid. Then
    the category of $\uG$-representations is naturally identified with
    the category of $G$-equivariant vector
    bundles on $X$.
    Let $\map fXY$ be a 
    map of $G$-sets, and let
    $\phi:=G\ltimes f$, then
    $\ind\phi{}$ is the equivariant Atiyah transfer along $f$.     
  \end{Exa}
\subsubsection{$K(n)$-local cohomology theories} 
  In \cite{Strickland:K(N)-local}, Strickland showed that the object
  $B\uG$ becomes self-dual in the $K(n)$-local category $\mathcal S_{K(n)}$,
  a localization of the stable homotopy category $\mathcal S$.
  Let $E^*$ be a generalized cohomology theory that is well-defined on
  $\mathcal S_{K(n)}$, for instance, Morava-Lubin-Tate theory $E_n^*$. 
  Then Strickland's work implies that 
  \begin{eqnarray*}
    \uG& \longmapsto & E^*(B\uG)
  \end{eqnarray*}
  is a global Mackey functor with transfers along all maps.
  The surjections axiom does not hold in this example.
\subsubsection{n-Class functions}\label{n-Class-Sec}
If $M$ is a Mackey functor then so is $M\circ \mI$, where $\mI$
denotes the inertia groupoid. If follows that, for any $n\in
\mathbb N$, we have a Mackey functor
\begin{eqnarray*}
  \on{n-Class}(\uG,R) & := & H^0(B\mI^n\uG;R)
\end{eqnarray*}
with transfers along faithful maps (transfers along all maps if $R$ is
a $\QQ$-algebra).
An element $\chi\in \on{n-Class}(\uG,R)$ is called an {\em $n$-class
  function} on $\uG$. It is defined on $n$-tuples of commuting
automorphisms of $\uG$ and satisfies
\begin{eqnarray*}
  \chi(sg_1s\inv,\dots,sg_ns\inv) & = & \chi(g_1,\dots, g_n)
\end{eqnarray*}
(invariance under simultaneous conjugation). For $n\geq 2$ this does
not satisfy the surjectivity axiom. The group $Aut(\ZZ^n)$ acts
on $\mI^n\uG = Hom(\ZZ^n,\uG)$, inducing an action by Mackey-functor
automorphisms on $\on{n-Class}(-,R)$. We will also be interested in the Mackey functor
\begin{eqnarray*}
  \uG&\longmapsto& \on{n-Class}(\uG;R)^{Aut(\ZZ^n)}.
\end{eqnarray*}
\begin{Def}
  For a topological space $X$, we write $Prin_\uG(X)$ for the category
  of principal $\uG$-bundles over $X$ and their isomorphisms over $X$.
\end{Def}
If $X$ is the $n$-torus
\begin{eqnarray*}
  T^n & = & \SS^1\times\dots\times \SS^1
\end{eqnarray*}
then we have an $Aut(\ZZ^n)$-equivariant equivalence of groupoids
\begin{eqnarray}
  \label{eq:Prin-I}
  \mI^n(\uG) & \simeq & Prin_\uG(T^n), 
\end{eqnarray}
where the $Aut(\ZZ^n)$-action on the left-hand side is by
automorphisms of $T^n=B\ZZ^n$. So, $\on{n-Class,R}(\uG)$ may be
interpreted as a ring of global functions on $Prin_\uG(T^n)$, and
$\on{n-Class}(\uG)^{Aut(\ZZ^n)}$
as ring of $Aut(\ZZ^n)$-invariant such global functions.

A variation of these are the Mackey functors 
\begin{eqnarray*}
  \uG&\longmapsto & \on{n-Class_p}(\uG,R)
\\
%\end{eqnarray*}
%and 
%\begin{eqnarray*}
 \text{and}\quad\quad\quad \uG&\longmapsto &
 \on{n-Class_p}(\uG,R)^{Aut((\ZZ\widehat{{}_p})^n)},
\end{eqnarray*}
of {\em $p$-adic $n$-class functions},
defined exactly like $\on{n-Class}$, but with the inertia
groupoid $\mI$ replaced by the $p$-adic inertia groupoid
\begin{eqnarray*}
  I_p(-) & = & \mF\!un(\ZZ\widehat{{}_p},-).
\end{eqnarray*}
So, elements of $\on{n-Class_p}(\uG,R)$ are functions defined on commuting
$n$-tuples of $p$-power order automorphisms and invariant under simultaneous conjugation.
\subsubsection{Subgroup class functions}
\begin{Def}
  Let $\uG$ be a finite groupoid. Then the {\em subgroup groupoid} of
  $\uG$, denoted $\mathcal S(\uG)$ has as objects the subgroups of $\uG$ and
  arrows
  $$%
    \xymatrix{H \ar[r]^{g\,\,\,\,\,\,\,\,} & gHg\inv.}
  $$
  A {\em subgroup class function} (a.k.a.\ {\em
    ``supercentral function''} in \cite{Knutson:lambda-rings})
  with values in $R$ is a function from $\mathcal S(\uG)$ to $R$. The
  ring of all subgroup class functions on $G$ is denoted
  \begin{eqnarray*}
    SCF(\uG,R) & = & R^\uG.
  \end{eqnarray*}
  If $R$ is clear from the context, we may simply write $SCF(\uG)$.
\end{Def}
\begin{Rem}
{The term ``supercentral function'' dates back to the time before
      ``super'' meant $\ZZ/2\ZZ$-graded.}
\end{Rem}
Similarly to $\on{n-Class}$, one sees that $SCF$ is a
Mackey functor with transfers along faithful maps. If $R$
contains $\QQ$, then SFC has all transfers, but no surjections axiom.
\subsubsection{Character maps}
%\label{sec:chromatic_character}
\begin{Exa}
  We have a natural transformation of Mackey functors 
  \begin{eqnarray*}
    char\negmedspace : R(\uG) & \longrightarrow & \on{1-Class}(\uG;\QQ[\mu_{\infty}])
  \end{eqnarray*}  
  defining an isomorphism onto its image
  \begin{eqnarray*}
    char\negmedspace : R(\uG) &
    \xrightarrow{\phantom{x}\cong\phantom{x}} &
    \on{1-Class}(\uG;\ZZ[\mu_{\infty}])^{\widehat\ZZ}. 
  \end{eqnarray*}  
\end{Exa}
\begin{Exa}
  Let $E_n$ be the degree zero part of Morava-Lubin-Tate theory. Then
  we have a natural transformation of Mackey functors (with all
  transfers)
    \begin{eqnarray*}
    char\negmedspace :E_n(B\uG) & \longrightarrow & \on{n-Class_p}(\uG;
    L )
  \end{eqnarray*}  
  defining an isomorphism onto its image
  \begin{eqnarray*}
    char\negmedspace : E_n(B\uG) &
    \xrightarrow{\phantom{x}\cong\phantom{x}} &
    \on{n-Class_p}(\uG;L)^{Aut(\widehat{\ZZ}_p^n)},
  \end{eqnarray*}
  see \cite[Thms C\& D]{Hopkins:Kuhn:Ravenel} and \cite[Prop.1.6]{Ganter:Orbifold_genera}.
\end{Exa}

Recall that the Burnside ring $A$ is initial among the global Mackey
functors with transfers along faithful maps:
given a global Mackey functor $M$, the unique natural transformation of
Mackey functors $\eta\negmedspace : A\Rightarrow M$ is given by
\begin{eqnarray*}
  \eta_G\negmedspace : A(G) & \longrightarrow & M(G) \\
                      \left[G/H\right] & \longrightarrow & ind_H^G(1).
\end{eqnarray*}
If $M$ has all transfers {\em and the surjections axiom holds} then
$\eta$ preserves all transfers. We will see that if $M$ is a global Green functor 
(resp.\ global power functor), defined below, then $\eta$ is a
transformation of Green functors (resp.\ power functors).

The following two examples should be compared to \cite[Chapter
  2.4]{Knutson:lambda-rings} and \cite{Solomon:The_Burnside_algebra}.
\begin{Exa}[Subgroup character of a permutation representation]
  The transformation 
  \begin{eqnarray*}
    \eta^{SC}\negmedspace : A & \longrightarrow & \on{SCF}(-,\ZZ)
  \end{eqnarray*}
  sends the $G$-set $Y$ to the subgroup class function 
  \begin{eqnarray*}
    \chi_Y \negmedspace : [H] & \longmapsto & |Y^{H}|
  \end{eqnarray*}
  ($H$-fixed points). Knutson proves that $\eta$ is injective. The
  subgroup characters of the transitive $G$-sets are Burnside's {\em `marks'}.
\end{Exa}
\begin{Exa}[Chromatic character maps]
\label{exa:chromatic_characters}
  The transformation
  \begin{eqnarray*}
    \eta^{n}\negmedspace : A & \longrightarrow & \on{n-Class}(-,\ZZ)
  \end{eqnarray*}
  sends the $G$-set $X$ to the $n$-class function 
  \begin{eqnarray*}
    \chi_X \negmedspace :(g_1,\dots,g_n) & \longmapsto & |X^{(g_1,\dots,g_n)}|.
  \end{eqnarray*}
\end{Exa}  
The two last examples are related by the maps
\begin{eqnarray*}
  \mI^n(\uG) & \longrightarrow & \mathcal S(\uG)\\
      (g_1,\dots,g_n) & \longmapsto & \langle g_1,\dots, g_n\rangle, 
\end{eqnarray*}
inducing the transformations of Mackey functors
\begin{eqnarray*}
  SCF(\uG,\ZZ) & \longrightarrow & \on{n-class}(\uG,\ZZ)^{Aut(\widehat\ZZ^n)}.
\end{eqnarray*}
For $n=1$, we have the commuting diagram
$$%
  \xymatrix{
    A  \ar[rr]^{\eta\quad\quad\quad\quad}\ar[rd]_\eta & &
    \on{1-Class}(-,\ZZ[\mu_\infty]) \\
   & R \ar[ru]_{char}&
  }
$$
and $\chi_X$ is the character of the permutation representation
$G\to S_{|X|}$. Similarly, for $n=2$, we have the permutation
2-representation defined by $X$ with the trivial 2-cocycle, and its
2-character is $\chi_X$, see \cite{Ganter:Kapranov:Representation}.
In the groupoid picture, $\chi_X$ counts the size of the fibers of
the map
\begin{eqnarray*}
  [\mI^n(\uG\ltimes X)] & \longrightarrow & [ \mI^n(\uG)].
\end{eqnarray*}
\subsubsection{$G$-modular functions} 
\label{G-modular-Sec}
  Let $\mathfrak H$ be the upper half plane and write
  \begin{eqnarray*}
    \mM_\uG & = & \mI^2\uG\times_{SL_2\ZZ}\mathfrak H
  \end{eqnarray*}
  for the orbifold groupoid 
  $$
    SL_2\ZZ\ltimes \(\mI^2\uG\times \mathfrak H\).
  $$
  The space
  $\mM_\uG$ parametrizes
  principal $\uG$-bundles over complex elliptic curves
  (see \cite{Ganter:Hecke_operators},
  \cite{Morava:Moonshine_elements}, \cite{Carnahan:Generalized_MoonshineI} . Global
  functions on $\mM_\uG$ are functions $F(g,h;\tau)$ satisfying
  \begin{eqnarray}\label{eq:moonshine1}
    F\(g^ah^c,g^bh^d;\tau\) & = & F\(g,h;\frac{a\tau+b}{c\tau+d}\)
  \end{eqnarray}
  and
  \begin{eqnarray}\label{eq:moonshine2}
    F\(sgs\inv,shs\inv;\tau\) & = & F\(g,h;\tau\).
  \end{eqnarray}

  %\footnote{In \cite{Devoto} Devoto studies a level $2$
  %  version of $M(\uG)$ and proves that for each finite group
  %$G$ this gives a Mackey functor for $G$.}
  For $\alpha\in H^3(B\uG;U(1))$, we have the Freed-Quinn line bundle
  $\mL^\alpha$ over $\mM_\uG$, see \cite{Freed:Quinn}.
  %\cite{Freed}. 
  Sections of $mL^{\alpha}$ may be interpreted as
  functions $F(g,h;\tau)$ as above satisfying \eqref{eq:moonshine1}
  up to a root of unity and satisfying \eqref{eq:moonshine2}, see
  \cite{Ganter:Hecke_operators}. We write
  \begin{eqnarray*}
    Mod(\uG) & = & \Gamma\(\mM_\uG,\mathscr O_{\mM_G}\)
  \end{eqnarray*}
  for the Mackey functor of {\em $\uG$-modular functions} as in
  \eqref{eq:moonshine1} and \eqref{eq:moonshine2}. Its transfer along
  $\phi$ is given by the formula \eqref{eq:H^0-transfer} applied to $\mI^2\phi$.
  For fixed $\uG$, we also define the 
  $\uG$-Mackey functor sending $\map\phi\uH\uG$ to the
  $H^3(BG;U(1))$-graded ring
  \begin{eqnarray*}
    Mod^*(\uH) & = & \bigoplus_{\alpha}\Gamma\(\mM_\uH,\mL^{\phi^*(\alpha)}\).
  \end{eqnarray*}
  A level $2$ version of the Mackey functor $Mod(\uG)$ was studied in
  \cite{Devoto:Equivariant_elliptic_homology}.
\subsubsection{Tate $K$-groups}\label{sec:q-exp}
Let $\xi$ be the natural automorphism of $id_{\mI\uG}$ with
\begin{eqnarray*}
  \xi_{(x,g)} & = & g.
\end{eqnarray*}
The Mackey functor 
\begin{eqnarray*}
  K_{Tate}(\uG) & \subset & R(\mI\uG)
\end{eqnarray*}
considered in \cite{Ganter:Orbifold_K-Tate}
is the Grothendieck group of formal power series 
$\sum V_nq^\frac n{|\xi|}$ such that $\xi$ acts on $V_n$ with eigenvalue $e^{2\pi
  in/|\xi|}$.
For fixed $\uG$, the transgression map
\begin{eqnarray*}
  \tau\negmedspace : H^3(B\uG;U(1)) & \longrightarrow & H^2(B\mI\uG;U(1)),
\end{eqnarray*}
defined, for instance, in \cite{Willerton:Twisted_Drinfeld}, gives rise
to the twisted Tate $K$-groups
\begin{eqnarray*}
  K^\alpha_{Tate}(\uG) & \subset & R^{\tau(\alpha)}(\mI\uG)\ps{q^{\frac1{h|\xi|}}}
\end{eqnarray*}
as follows: the cocycle $\tau(\alpha)$ classifies an extension
$\widetilde{\mI\uG}$ of $\mI\uG$. A lift $\widetilde\xi$ of $\xi$ in
$\widetilde{\mI\uG}$ has order $h|\xi|$ for some natural number $h\geq
1$. Then $K^\alpha_{Tate}$ is the Grothendieck group of formal power
series
$\sum V_nq^{\frac1{h|\xi|}}$ such that 
$$V_n\in
Rep_\CC(\widetilde{\mI\uG})$$ is an 
eigenspace of $\widetilde\xi$ 
with eigenvalue $e^{2\pi i n/h|\xi|}$.
This gives rise to an $H^3(B\uG;U(1))$-graded $\uG$-Mackey functor
$K^*_{Tate}$.

\medskip
By the {\em $q$-expansion principle} of \cite[Sec.5]{Freed:Quinn}
(see also \cite[2.4]{Ganter:Hecke_operators}), we have a natural transformation of
Mackey functors
\begin{eqnarray*}
  Mod(-) & \Longrightarrow & K_{Tate}(-)\tensor \CC
\end{eqnarray*}
and of $H^3(B\uG;U(1))$-graded $\uG$-Mackey functors
\begin{eqnarray*}
  Mod^*(-) & \Longrightarrow & K^*_{Tate}(-)\tensor \CC.
\end{eqnarray*}
Alternatively, we could interpret $q$-expansion as a map
\begin{eqnarray*}
  Mod^\alpha(\uG) & \longrightarrow &
  \bigoplus_{[g]}\on{1-Class}\(aut_{\widetilde{\mI\uG}}(g),\ZZ[\mu_\infty]\)\ps{q^\frac1{h|g|}}
\end{eqnarray*}
and note that if the modular section $F$ is invariant under the $Aut(\widehat\ZZ)$
action on each summand of the target, then the $q$-expansion actually
takes values in $K_{Tate}^\alpha(\uG)$. This is the second condition of Norton's
generalized Moonshine conjecture \cite{Norton:Generalized_moonshine}.
%and note that in the $[g]$th summand, the coefficient of $q^r$
%evaluated at $\widetilde\xi_g$ equals $e^{r}$.
\section{Products}
\subsection{Green functors} 
Assume that the additive category $\mA$ comes equipped with a bilinear
symmetric
monoidal structure $\tensor$ with unit $A$.
\begin{Def}
  A {\em globally defined Green functor} with values in
  $(\mA,\tensor,A)$ is a globally defined Mackey functor $M$, satisfying $M(\pt)
  = \mA$ and such that the contravariant part, $M^*$, takes 
  values in the category of commutative ring objects in $\mA$.
\end{Def}
\begin{Prop}
  A globally defined Green functor with values in
  $(\mA,\tensor,A)$ is the same thing as a globally defined Mackey
  functor $M$ together 
  with an isomorphism $M(\pt)\cong A$ and with unitary, commutative and
  associative {\em K\"unneth maps}
  \begin{eqnarray}\label{eq:mu}
    \kappa\negmedspace : M(\uG)\tensor M(\uH) & \longrightarrow & M(\uG\times\uH),
  \end{eqnarray}
  making $M^*$ a (lax) bimonoidal functor.
\end{Prop}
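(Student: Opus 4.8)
The plan is to exhibit a bijection between the two structures by the standard passage between internal and external products, and to check that it matches the axioms on either side. In one direction, given a commutative ring object structure $\mu_{\uG}\colon M(\uG)\tensor M(\uG)\to M(\uG)$ on each value with each $\phi^*$ a ring homomorphism, I define the K\"unneth map as the external product
$$
\kappa_{\uG,\uH}\;=\;\mu_{\uG\times\uH}\circ\bigl(p_{\uG}^*\tensor p_{\uH}^*\bigr)\colon M(\uG)\tensor M(\uH)\longrightarrow M(\uG\times\uH),
$$
where $p_{\uG},p_{\uH}$ are the projections of the categorical product in $\Gpdf$. In the other direction, given the K\"unneth maps, I define the internal multiplication by pulling back along the diagonal $\diag_{\uG}\colon\uG\to\uG\times\uG$,
$$
\mu_{\uG}\;=\;\diag_{\uG}^*\circ\kappa_{\uG,\uG}\colon M(\uG)\tensor M(\uG)\longrightarrow M(\uG),
$$
with unit $A\cong M(\pt)\xrightarrow{\;u^*\;}M(\uG)$ induced by the terminal map $u\colon\uG\to\pt$.

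The forward direction reduces to one naturality computation. For maps $\phi\colon\uG\to\uG'$ and $\psi\colon\uH\to\uH'$ the strict identities $\phi\circ p_{\uG}=p_{\uG'}\circ(\phi\times\psi)$ and $\psi\circ p_{\uH}=p_{\uH'}\circ(\phi\times\psi)$ give, after applying $M^*$, the relations $p_{\uG}^*\phi^*=(\phi\times\psi)^*p_{\uG'}^*$ and likewise for $\uH$; combined with the hypothesis that $(\phi\times\psi)^*$ is multiplicative, this yields $(\phi\times\psi)^*\kappa_{\uG',\uH'}=\kappa_{\uG,\uH}(\phi^*\tensor\psi^*)$, which is exactly the naturality making $M^*$ lax monoidal. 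Commutativity, associativity and unitality of $\kappa$ are then inherited from the corresponding properties of $\mu$ together with the evident symmetry, associativity and unit compatibilities of the projections; the commutativity axiom is precisely the symmetry condition, so $M^*$ becomes lax symmetric monoidal.

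For the reverse direction, that each $\phi^*$ is a ring homomorphism follows from the relation $\diag_{\uH}\circ\phi=(\phi\times\phi)\circ\diag_{\uG}$: applying $M^*$ and feeding in the naturality of $\kappa$ produces $\phi^*\mu_{\uH}=\mu_{\uG}(\phi^*\tensor\phi^*)$, and compatibility of $u$ with $\phi$ handles the unit. Commutativity of $\mu_{\uG}$ comes from commutativity of $\kappa$ together with the fact that the symmetry fixes the diagonal; associativity comes from associativity of $\kappa$ and the coincidence of the two iterated diagonals $\uG\to\uG\times\uG\times\uG$; unitality comes from the unit axiom for $\kappa$. Throughout, any compositions of structure maps that agree only up to natural isomorphism still induce equal maps on $M$ by the Natural isomorphisms axiom.

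Finally I verify that the two passages are mutually inverse. Starting from $\mu$ and using $p_i\circ\diag_{\uG}=\id_{\uG}$ (so that $\diag_{\uG}^*p_i^*=\id$) together with multiplicativity of $\diag_{\uG}^*$ recovers $\mu$; starting from $\kappa$, naturality of $\kappa$ along the projections rewrites $\diag_{\uG\times\uH}^*\,\kappa_{\uG\times\uH,\uG\times\uH}\,(p_{\uG}^*\tensor p_{\uH}^*)$ as $\bigl((p_{\uG}\times p_{\uH})\circ\diag_{\uG\times\uH}\bigr)^*\kappa_{\uG,\uH}$, and since that composite is the identity of $\uG\times\uH$, one recovers $\kappa$. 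I expect the main obstacle to be purely organizational: arranging the coherence diagrams so that each axiom on one side is sent precisely to its counterpart on the other, while consistently invoking the Natural isomorphisms axiom to upgrade equalities of groupoid maps (up to isomorphism) to genuine equalities of the induced maps in $\mA$.
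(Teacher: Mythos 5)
Your proposal is correct and follows essentially the same route as the paper: the paper's proof consists exactly of the two constructions you give, namely $\kappa_{\uG,\uH}=\mu_{\uG\times\uH}\circ(pr_\uG^*\tensor pr_\uH^*)$ in one direction and $\mu_\uG=\delta_\uG^*\circ\kappa_{\uG,\uG}$ with unit $\varepsilon_\uG^*$ in the other. The only difference is that you spell out the naturality checks and the verification that the two passages are mutually inverse, which the paper leaves implicit (and, as a small remark, the identities you use, such as $p_i\circ\diag_\uG=\id_\uG$, hold strictly in $\Gpdf$, so the Natural isomorphisms axiom is not actually needed there).
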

\begin{Pf}{}
  Let $M$ be a global Mackey functor. Then the K\"unneth map
  \eqref{eq:mu} is defined as the
  composite of
  $$
    M(\uG)\tensor M(\uH)
    \xrightarrow{\phantom{XX}pr_\uG^*\tensor pr_\uH^*\phantom{XX}} 
    M(\uG\times\uH)\tensor M(\uG\times\uH)
  $$
  with the multiplication of the ring object
  $M(\uG\times\uH)$. Conversely, given K\"unneth maps, the
  multiplication on $M(\uG)$ is given by
  \begin{eqnarray*}
    \delta^*\circ\kappa\negmedspace :M(-)\tensor M(-) & \longrightarrow & M(-)
  \end{eqnarray*}
  and the unit by 
  \begin{eqnarray*}
    \varepsilon^*\negmedspace : A& \longrightarrow & M(-).
  \end{eqnarray*}
  Here $\delta$ and $\varepsilon$ are the natural transformations on
  $Gpd$ sending $\uG$ to its diagonal map
  \begin{eqnarray*}
    \delta_\uG\negmedspace : \uG&\longrightarrow & \uG\times \uG
  \end{eqnarray*}
  and to the unique map
  \begin{eqnarray*}
    \varepsilon_\uG\negmedspace : \uG&\longrightarrow&\pt.
  \end{eqnarray*}
\end{Pf}
In almost all of our examples, the K\"unneth map $\kappa$ turns out to be
an isomorphism. The exception is the Burnside ring example, and in the
$K_{Tate}$-case, we need to invert $q$ to have a K\"unneth isomorphism.
\subsubsection{Consequences of the definition}
In all our examples of Green functors, it makes sense to speak of
elements of objects of $\mA$, so, to simplify notation, we will denote
the product of two elements by $s\cdot t$. 
Let $M$ be a global Green functor. Then the
following hold.
\begin{description}
\item[Frobenius axiom:] Let $\map\phi\uG\uH$ be a faithful map of
  groupoids. For elements $s\in M(\uG)$ and $t\in M(\uH)$, we have
  \begin{eqnarray*}
    \phi_*(s)\cdot t & = & \phi_*\(s\cdot\phi^*t\)
  \end{eqnarray*}
  and 
  \begin{eqnarray*}
    t\cdot \phi_*s & = & \phi_*\((\phi^*t)\cdot s\).
  \end{eqnarray*}
  \begin{proof}
    The second equation follows from naturality of $\kappa$ and the homotopy pullback square
    $$
      \xymatrix@=2.7ex{
        \uG\ar[2,0]_{\(\phi\times\id_\uG\)\circ\delta_\uG}\ar[0,3]^{\phi}&&&
        \uH\ar[2,0]^{\delta_\uH}\\  \\
        \uH\times\uG\ar[0,3]^{\id_\uH\times\phi}&&&\uH\times\uH.
      }
    $$
  The first equation is proved analogously.
  \end{proof}
  \item[Inner products:]
    We have a bilinear form
    \begin{eqnarray*}
      \langle r,s\rangle_\uG & = & \varepsilon_{\uG,*}(r\cdot s)
    \end{eqnarray*}
    on $M(\uG)$.
  \item[Frobenius reciprocity:] Let $\map\phi\uG\uH$ be a faithful
    map. Then we have
    \begin{eqnarray*}
      \langle r,\phi_*s\rangle_\uH  & = & \langle \phi^*r,s\rangle_\uG     
    \end{eqnarray*}
    
\end{description}

\subsection{The graded rings $S_M(G)$}
\begin{Def}
  The $n$th symmetric power of a groupoid $\uG$ is the groupoid
  $S_n\smallint \uG$ obtained from $\uG^n$ by adding arrows
  \begin{eqnarray*}
    (x_1,\dots,x_n) & \xrightarrow{\phantom{x}\sigma\phantom{x}} &
    (x_{\sigma\inv(1)},\dots,x_{\sigma\inv(n)})
  \end{eqnarray*}
  for $\sigma\in S_n$ and $(x_1,\dots,x_n)\in G_0^n$, with the
  relation
  \begin{eqnarray*}
    \sigma (g_{1},\dots,g_{n}) & = &   (g_{\sigma\inv(1)},\dots,g_{\sigma\inv(n)})\sigma.
  \end{eqnarray*}
\end{Def}
This construction is functorial in $\uG$ and preserves
equivalences. 
If $\uG = G$ is a group then $S_n\smallint G$ is the wreath product of
$S_n$ with $G$.
We have 
\begin{eqnarray*}
  S_0\smallint \uG & = & 1
\end{eqnarray*}
and equivalences
\begin{eqnarray}\label{eq:S_nxS_m}
   \coprod_{k=0}^n(S_k\smallint\uG)\times(S_{n-k}\smallint\uH) &
   \xrightarrow{\phantom{xx}\sim\phantom{xx}} & 
   S_n(\uG\sqcup\uH).
\end{eqnarray}
Let $M$ be a global Green functor, and let $\uG$ be a finite
groupoid. 
\begin{Def}
  We write
  \begin{eqnarray*}
    S_M(\uG)&:=&{\bigoplus^\wedge_{n\geq 0}}\, M(S_n\lwr \uG)\,t^n, 
  \end{eqnarray*}
  for the infinite product of the $M(S_n\lwr\uG)$.
  Here $t$ is a dummy variable, keeping track of the grading.
  We will also write $S_M$ for $S_M(\pt)$.
\end{Def}
$S_M$ is bivariantly functorial, and \eqref{eq:mu} and
\eqref{eq:S_nxS_m} give a natural transformation
\begin{eqnarray*}
  \kappa\negmedspace :S_M(\uG)\otimes S_M(\uH) & \longrightarrow &
  S_M(\uG\sqcup\uH).
\end{eqnarray*}
There are two ring structures on $S_M(\uG)$, namely
\begin{description}
\item[Juxtaproduct:] This is defined by degree-wise multiplication,
  using the ring structure of $M(S_n\lwr\uG)$. The juxtaproduct of two
  elements of different degrees is zero. The unit of the juxtaproduct
  is $\mathbf 1 = \sum 1_n\,t^n$, where $1_n$ stands for $1\in
  M(S_n\lwr\uG)$.
\item[Cross product:] 
  The cross product is
  $$
    \times\negmedspace : S_M(\uG)\tensor S_M(\uG)
    \xrightarrow{\phantom{xx}\kappa\phantom{xx}} 
    S_M(\uG\sqcup\uG) \xrightarrow{\phantom{xx}S_{M_*}(f)\phantom{xx}} 
    S_M(\uG),
  $$
where $\map f{\uG\sqcup\uG}\uG$ is the fold map. This makes $S_M(\uG)$
into a graded ring with unit $1=1_0$.
%the inclusion
%\begin{eqnarray*}
%  M(S_0\lwr\uG)  & \hookrightarrow & S_M(\uG).
%\end{eqnarray*}
\end{description}
We will see below that $(S_M,\times,1_0)$ may be viewed as a ring of
internal operations on $M(\uG)$, if $M$ is a global power functor with
all transfers.

The definition of $S_M$ still makes sense if $M$ is replaced by an arbitrary
bivariant functor out of $Gpd$. So, we may iterate $S$, and
find that 
\begin{eqnarray*}
  M   &\longmapsto&   S_M
\end{eqnarray*}
is a comonad, whose comultiplication are
\begin{eqnarray*}
  \nabla^*\negmedspace  :S_M&\Longrightarrow & S_{S_M}
\end{eqnarray*}
and counit
\begin{eqnarray*}
  \upsilon \negmedspace: S_M&\Longrightarrow & M
\end{eqnarray*}
where $\nabla$ is made up of the canonical inclusions
\begin{eqnarray*}
  %\nabla_{n,m}\negmedspace :
  S_n\lwr S_m\lwr\uG&\quad\hookrightarrow\quad & S_{nm}\lwr\uG
\end{eqnarray*}
and $\upsilon$ is projection onto the coefficient of $t^1$.
If
$$
  \longmap\kappa{M(S_n\lwr\uG)\tensor M(S_m\lwr\uG)}M((S_n\lwr\uG)\times (S_m\lwr\uG))
$$
is an isomorphism for all $m,n$, then
$S_M(\uG)$ is a Hopf algebra over $M(\pt)$ whose
coproduct is the pullback along \eqref{eq:S_nxS_m} (see \cite[Thm 1.2]{Hoffman:tau-rings}).
\subsubsection{The elements $f_n$ and $c_n$}\label{sec:elements}
Let $M$ be a global Green functor, $\mathbf 1=\sum 1_nt^n$ the unit of the
juxtaproduct on $S_M$. There are two important sequences of elements
of $S_M$. Let 
\begin{eqnarray*}
  f(t) & = & \sum_{n\geq 0}f_n\,t^n 
\end{eqnarray*}
be the power series with 
\begin{eqnarray*}
  \mathbf 1 \times f(-t) & = & 1.
\end{eqnarray*}
This determines the $f_n\in M(S_n)$ inductively. The elements $c_k\in
M(S_k)$ for $k\geq 1$ are defined by
\begin{eqnarray*}
  \sum_{k\geq 1} c_k\,t^{k-1} & = & \frac d{dt}\,ln\,\mathbf 1,
\end{eqnarray*}
where the logarithmic power series $ln$ is computed using the cross product.
\subsection{Examples}
%All the examples above are global Green functors.
\subsubsection{Burnside rings}
The Burnside ring functor is a global Green functor with the K\"unneth
map given by the Cartan product of sets. The ring 
\begin{eqnarray*}
  \mathbf B & = & \bigoplus^\wedge_{n\geq 0}A(S_n)
\end{eqnarray*}
(read `Beta') is the free $\beta$-ring on one generator. It makes an
early brief appearance in \cite{Knutson:lambda-rings}, and plays a
central role in \cite{Rymer:Power_operations}, 
\cite{Ochoa:Outer_plethysm}, \cite{Morris:Wensley:Adams_operations},
\cite{Vallejo:The_free_beta-ring},  
\cite{Guillot:Adams_operations}, and \cite{Morava:Santhanam}.

Let $H\sub S_n$ be a subgroup. Then evaluation at $H$ is an element of
$SCF(S_n)^*$. Precomposing with $\eta^{SCF}$, this becomes the
element of $A(S_n)^*$.
Similarly, $H$ defines the maps 
\begin{eqnarray}
\notag
  SCF(S_n\times \uG)&\longrightarrow& SCF(\uG)\\
\notag
  \chi &\longmapsto& \chi([H]\times-)\\
&\text{and}&\\
\label{eq:psi_H}
\notag
  A(S_n\times \uG)&\longrightarrow& A(\uG)\\
\notag
  Y &\longmapsto& Y^H.
\end{eqnarray}
%
%Let $\mathbf S$ be the groupoid $\mathbf S=\coprod_{n\geq 0}S_n$. Then
%\begin{eqnarray*}
%  A(\uG) & = & \(Hom(\uG,\mathbf S),*\)^{gp},
%\end{eqnarray*}
%and hence    
%\begin{eqnarray*}
%  \mathbf B & = & Hom(\mathbf S,\mathbf S)^{gp}.
%\end{eqnarray*}
%There is another important family of elements of $S_M$, for arbitrary
%$M$, namely $\eta(X_H)$, where $H$ runs through the finite groups, and 
%$X_H\in \mathbf B$ is the element
%\begin{eqnarray*}
%  X_H & := & [S_{|H|}/H].
%\end{eqnarray*}
%We have 
%\begin{eqnarray*}
%  X_H\times X_G & = & X_{H\times G},\\
%  X_1 & = & 1_1,\quad\text{and}\\
%  deg(X_H) & = & |H|, 
%\end{eqnarray*}
%so the $\ZZ$-span of the $X_H$ forms a subring of $\mathbf B$.
\subsubsection{Linear and projective representations}
\label{sec:linear-and-projective} 
If $M=R$ is the representation ring functor then the Schur-Weyl theorem
gives the Hopf algebra isomorphism
\begin{eqnarray*}
  \bigoplus_{n\geq0}R(S_n) & \cong & \on{lim}\,\,\ZZ[x_1,\dots,x_m]^{S_m}
\end{eqnarray*}
identifying the representation ring of the symmetric groups with the
Hopf ring of symmetric functions in infinitely many variables.
On the left-hand side, the element $f_n\in R(S_n)$ is the alternating representation, while
$c_k$ is the element satisfying
\begin{eqnarray*}
  \langle\varrho, c_k\rangle_{S_k} & = & \chi_\varrho((1,\dots,k)).
\end{eqnarray*}
On the right-hand side, $f_n$ corresponds to the $n$th elementary
symmetric function, while $c_k$ becomes the $k$th power sum function.

We note that, differing from the standard convention, the inner product
on $R(\uG)$ is  
\begin{eqnarray*}
  \langle\varrho_1,\varrho_2\rangle & = & \(\varrho_1\tensor\varrho_2\)^G.
\end{eqnarray*}

\medskip
Let $\alpha_n$ be the cocycle classifying Schur's spin extension 
$$
  1\longrightarrow \{\pm1\}\longrightarrow \widetilde
  S_n\longrightarrow S_n\longrightarrow 1
$$
For $n>3$ Schur showed that $[\alpha_n]$ is the only non-trivial
element of $H^2(S_n;U(1))$.
Following J\'ozefiak, we write $R^-(S_n)$ for the Grothendieck group of
projective super-representations of $S_n$ with cocycle $\alpha_n$ and
$\Gamma = \ZZ[p_1(X), p_3(X),\dots]$ for the graded ring of
polynomials in odd power sums of the variables $x_1, x_2, \dots$ with
$deg(p_j(X)) = j$. 
Following Schur \cite{Schur:Ueber_die_Darstellung}, 
J\'ozefiak constructs an isomorphism of graded $\QQ[\sqrt2]$-algebras
\begin{eqnarray*}
  \bigoplus_{n\geq 0}R^-(S_n)_{\QQ[\sqrt2]}&
  \xrightarrow{\phantom{xx}\cong\phantom{xx}} &
  \Gamma\tensor \QQ[\sqrt2].
\end{eqnarray*}
\subsubsection{Morava-Lubin-Tate theories}
The Hopf ring
\begin{eqnarray*}
    S_{E_n} & = & \bigoplus^\wedge_{n\geq 0} E_n(BS_n)
\end{eqnarray*}
is the subject of \cite{Strickland:Morava_E-theory} and
\cite{Strickland:Turner}.
\subsubsection{n-Class functions}
Recall that $n$-class functions are functions on 
\begin{eqnarray*}
  [\mI^n\uG] & \cong & Prin_\uG(T^n) /\cong,  
\end{eqnarray*}
and that $Prin_{S_k}(T^n)$ is equivalent to the category of $k$-fold
covers of $T^n$. 
%The algebra $S_{\on{n-Class}}$ is the ring of functions on the monoid
First, we consider the case $\uG=\pt$.
The infinite groupoid
\begin{eqnarray*}
  \coprod_{k\geq 0}\, \mI^n S_k & \cong & \on{Cov^{fin}}(T^n) 
\end{eqnarray*}
is a monoid in $G\!pd$, with the
concatenation product on the left-hand side corresponding to disjoint
union of covers on the right. So, indecomposable covers are the
connected covers. Indecomposable $n$-tuples of permutations are
described as follows: let $A$ be an abelian group together with a
surjective group homomorphism
$$
  \xymatrix{
  \alpha\negmedspace : \ZZ^n \ar@{->>}[r]& A.
  }
$$
Any numbering of the elements of $A$ yields a map
$$
  \xymatrix{
  A \ar@{>->}[r]& S_{|A|},
  }
$$
which we compose with $\alpha$ to obtain an isomorphism class
\begin{eqnarray*}
  [\alpha] & \in & [\mI^nS_{|A|}],
\end{eqnarray*}
independent of our choice of numbering. Elements of this form are
exactly the indecomposable objects on the left.
It follows that we have an isomorphism of graded rings
\begin{eqnarray*}
  S_{\on{nClass_R}} & \cong &
  R[\chi_\alpha']_{\alpha\in\mathfrak A'}
\end{eqnarray*}
where $\chi_\alpha'$ is the characteristic function of $[\alpha]$, with
$deg(\chi'_\alpha)  =  |A| $, while
\begin{eqnarray*}
  \mathfrak A' & = & \{\alpha:\ZZ^n\twoheadrightarrow A\}/\cong
\end{eqnarray*}
is the set of epimorphisms out of $\ZZ^n$ up to isomorphisms under
$\ZZ^n$. Similarly,
\begin{eqnarray*}  
  \(S_{\on{nClass_R}}\)^{Aut(\ZZ^n)} & \cong & R[\![\chi_\alpha]\!]_{\alpha\in\mathfrak A},
\end{eqnarray*}
where
function $\chi_\alpha$ is the characteristic function of the
$Aut(\ZZ^n)$-orbit of $[\alpha]$, and
\begin{eqnarray*}
  \mathfrak A & = & \mathfrak A'/Aut(\ZZ^n).
\end{eqnarray*}
Let now $\uG$ be arbitrary.
\begin{Def}
  Let $X$ be a topological space.
  The category $\on{G-Cov_k}(X)$ (respectively $\on{G-Cov^{fin}}(X)$)
  %of {\em $n$-fold covers with $G$-bundle}
  has as objects composites
  $$
    \xymatrix{
    P\ar[d]_{\pi}\\
    Y\ar[r]^{\phi}& X,
    }
  $$
  where $\pi$ is a principal
  $G$-bundle and $\phi$ is an $k$-fold (respectively finite) cover.
  Morphisms in $\on{G-Cov_k}(X)$ are $G$-equivariant homeomorphisms $\map fPP'$
  covering the identity on $X$. 
\end{Def}
For any such morphism $f$, the map $\map{(f/G)}{Y}{Y'}$ is an
  isomorphism of $k$-fold covers of $X$.
\begin{Lem}\label{lem:Cov(G)}\label{axioms-Lem}
(a)  There is an equivalence of categories 
  \begin{eqnarray*}
    \on{Prin}_{S_k\lwr G}(X) & \simeq & \on{G-Cov}_{k}(X). 
  \end{eqnarray*}

\medskip
(b) Under these identifications, taking
disjoint union of covers of degrees $k$ and $m$ corresponds to
induction along the inclusion 
\begin{eqnarray*}
  S_k\times S_m & \hookrightarrow& S_{k+m}.
\end{eqnarray*}

\medskip
(c) Principal $S_k\lwr S_m\lwr G$-bundles are identified with composites
$$
  P\stackrel\pi\longrightarrow  Z\stackrel\chi\longrightarrow
  Y\stackrel\phi\longrightarrow X,
$$
where $\pi$ is a principal $G$-bundle, $\chi$ is a cover or degree $m$
and $\phi$ is a cover of degree $k$. 
Induction along the inclusion
\begin{eqnarray*}
  \nabla\negmedspace:{S_k\lwr S_m\lwr G}& \longrightarrow& {S_{km}\lwr G}  
\end{eqnarray*}
corresponds to the functor that composes $\chi$ and $\phi$ to obtain
an $km$-fold cover.

%(a) Let $\map\psi RX$ be a principal $S_n\lwr G$-bundle, and let 
%    $\map{\psi'}{R'}X$ be a principal $S_m$-bundle, let $\map\phi YX$ and
%    $\map{\phi'}{Y'}X$ be the associated $n$- and $m$-fold covers, and
%    let $\pi$ and $\rho$ be the associated principal $G$-bundles over
%    $Y$ and $Y'$.
%
%    (b) Let $\map\psi RX$ be a principal $S_n\lwr S_m\lwr G$-bundle. The
%    previous lemma associates to $\psi$ an $n$-fold cover $\map\phi YX$
%    and a principal $S_m\lwr G$-bundle $\map\pi PY$. Further, it assigns
%    to $\pi$ an $m$-fold cover $\map\chi ZY$ and a principal $G$-bundle
%    $\rho$ over $Z$.
%    On the other hand, we can extend $\psi$ to a principal $S_{nm}\lwr
%    G$-bundle. 
%    The $nm$-fold cover of $X$ associated to this bundle is isomorphic
%    to $\phi\circ\chi$, and the principal
%    $G$-bundle over its total space is isomorphic to $\rho$.
\end{Lem}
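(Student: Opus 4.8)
The plan is to realize every structure appearing in the lemma as a bundle of free $G$-sets, so that all three parts reduce to statements about one standard fibre. Write $Q_k = \{1,\dots,k\}\times G$ for the free right $G$-set with $k$ orbits, with $G$ acting by right multiplication on the second factor. The decisive observation is that the group of right-$G$-equivariant bijections of $Q_k$ is canonically the wreath product,
\[
  \Aut_G(Q_k)\;\cong\; S_k\lwr G,
\]
an automorphism permuting the $k$ copies of $G$ by some $\sigma\in S_k$ and acting within the copies by a tuple of left translations $(g_1,\dots,g_k)$. Moreover $S_k\lwr G$ acts on $Q_k$ on the left, commuting with the right $G$-action, and covers the tautological action of $S_k$ on $\{1,\dots,k\}=Q_k/G$.

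For (a) I would write down the two functors explicitly and check that they are mutually inverse. In one direction, send a principal $(S_k\lwr G)$-bundle $P\to X$ to the composite
\[
  P\times_{S_k\lwr G}Q_k\;\longrightarrow\; P\times_{S_k\lwr G}\{1,\dots,k\}\;\longrightarrow\; X ;
\]
the residual right $G$-action on $Q_k$ descends to make the first map a principal $G$-bundle, while the second is a $k$-fold cover because its fibre is $Q_k/G$. In the other direction, a frame-bundle construction: send a $G$-cover $\tilde P\to Y\to X$ to the bundle whose fibre over $x$ is the set $\Inj_G\bigl(Q_k,\coprod_{y\in Y_x}\tilde P_y\bigr)$ of right-$G$-equivariant bijections from $Q_k$ onto the disjoint union of the $G$-torsors $\tilde P_y$ sitting over the $k$ points $Y_x$ of $Y$ above $x$. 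This fibre is a torsor under $\Aut_G(Q_k)=S_k\lwr G$.

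The \emph{main obstacle} is precisely the (routine but fiddly) verification in (a): that both assignments are functorial and natural in $X$, that they are genuinely mutually inverse up to coherent natural isomorphism, and that local triviality is preserved, i.e.\ the continuity and gluing bookkeeping that turns a fibrewise statement into an equivalence of categories of bundles. Once this is in place, (b) is a formal consequence of the same fibre picture: the canonical identification $Q_{k+m}\cong Q_k\sqcup Q_m$ identifies the block inclusion $\Aut_G(Q_k)\times\Aut_G(Q_m)\hookrightarrow\Aut_G(Q_{k+m})$ with $(S_k\lwr G)\times(S_m\lwr G)\hookrightarrow S_{k+m}\lwr G$, and induction along it sends a pair of block frames to the full frame set of the disjoint union, so that on the cover side it is exactly disjoint union of the degree $k$ and degree $m$ covers.

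For (c) I would simply iterate (a). A principal $(S_k\lwr S_m\lwr G)$-bundle is, by (a) applied with structure group $S_m\lwr G$, an $(S_m\lwr G)$-cover of degree $k$, i.e.\ a composite $P'\to Y\to X$ with $P'\to Y$ a principal $(S_m\lwr G)$-bundle and $Y\to X$ a $k$-fold cover; applying (a) once more over the base $Y$ expands $P'\to Y$ into a $G$-cover $P\to Z\to Y$ of degree $m$, yielding the tower $P\to Z\to Y\to X$ of the asserted shape. Finally, under $\Aut_G(-)$ the inclusion $\nabla\colon S_k\lwr S_m\lwr G\to S_{km}\lwr G$ is the map induced by the $G$-set identification $Q_{km}\cong(\{1,\dots,k\}\times\{1,\dots,m\})\times G$ that forgets the grouping of the $km$ copies of $G$ into $k$ blocks of $m$. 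Inducing a torsor along $\nabla$ therefore allows the blocks to mix freely, which on covers replaces the pair $(Z\to Y,\;Y\to X)$ by its composite $Z\to X$ of degree $km$ while leaving the principal $G$-bundle $P\to Z$ untouched, exactly as claimed.
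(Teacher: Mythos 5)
Your proposal is correct, and the functors you construct are in substance the same ones the paper uses, but your packaging is genuinely different and worth comparing. The paper works bare-hands: its forward direction builds the bundle $Q$ of numberings of the fibers of $Y\to X$ and then the bundle $R$ of ordered tuples $(p_1,\dots,p_n)$ lying over such numberings (this is literally your frame bundle $\Inj_G\bigl(Q_k,\coprod_{y}\tilde P_y\bigr)$, unpacked pointwise), while its inverse direction takes quotients $P=(S_{k-1}\lwr G)\backslash R$ and $Y=S_{k-1}\backslash(G^k\backslash R)$, verifying the principal $G$-bundle structure by an explicit coset bijection $(S_{k-1}\lwr G)\backslash(S_k\lwr G)\leftrightarrow(S_{k-1}\backslash S_k)\times G$ and a commutation argument; your associated bundle $P\times_{S_k\lwr G}Q_k$ is the same object, since $Q_k\cong(S_k\lwr G)/(S_{k-1}\lwr G)$ as a left $(S_k\lwr G)$-set with residual right $G$-action, so your formalism absorbs exactly the coset bookkeeping the paper does by hand. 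What your route buys is parts (b) and (c): the paper's written proof establishes only (a) (and ends by declaring the mutual-inverse check straightforward, as you also do), leaving (b) and (c) implicit, whereas in your setup they fall out formally from the $G$-set identifications $Q_{k+m}\cong Q_k\sqcup Q_m$ and $Q_{km}\cong\{1,\dots,k\}\times Q_m$ together with the pivot $\Aut_G(Q_k)\cong S_k\lwr G$; in particular your treatment of $\nabla$ as the forgetting of the block structure, and of (c) by applying (a) twice (once with structure group $S_m\lwr G$, once over the base $Y$), supplies an argument the paper omits. What the paper's approach buys is concreteness: the explicit numbering/tuple description is the form actually used later (e.g.\ in the corollary identifying $c_k$ and $f_k$ with functions on covers), so its pointwise constructions are immediately usable there.
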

\begin{Pf}{of Lemma {\ref{lem:Cov(G)}}}
Fix $X$, and
assume we are given an $n$-fold cover 
$\map {\phi}{Y}X$ and a principal $G$-bundle $\map{\pi}{P}{Y}$ over
the total space of $\phi$.  
We let $Q$ be the bundle whose fiber over $x$ consists of  all
possible numberings of the fiber of $Y$ over $x$. Then $Q$
is a principal $\Sn$-bundle over $X$ whose associated $n$-fold cover
is canonically isomorphic to $Y$.
Similarly, we define a bundle $R$ over $Q$ whose fiber over
$(y_1,\dots, y_n)$ consists of ordered $n$-tuples $(p_1,\dots, p_n)$
of points in $P$ satisfying
${\pi}(p_i) = y_i$. The action of $G$ on $P$
makes $R$ into a principal $G^n$-bundle
over $Q$, and the action of $\Sn$ on the fibers of $R$ makes $R$ into a
principal $S_n\lwr G$-bundle over $X$.

Conversely, assume that
$
  \map\psi RX
$
is a principal
$\Sn\lwr G$-bundle.
Let $Q:= G^n\backslash R$, and consider the commuting diagram
$$
  \xymatrix{
    R\ar[d]_{\xi}\ar[rd]^{\psi}\\
    Q\ar[r]_{\eta}&X,
  } 
$$
where $\xi$ and $\eta$ are the quotient maps. Then $\xi$ naturally has the
structure of a 
principal $G^n$-bundle, 
while $\eta$ inherits that of a principal $S_n$-bundle. 

We view $S_{n-1}$ as a subgroup of 
$S_n$ by identifying the elements of $S_{n-1}$ with the permutations of
$\{1,\dots,n\}$ that fix $n$. 
Let
$$
  P:= \(S_{n-1}\lwr G\)\backslash R
$$
and
$$
  Y := S_{n-1}\medspace\backslash\medspace Q,
$$
and let $\pi$ and $\phi$ be the maps induced by $\xi$ and $\eta$.
Then $\map{\phi}{Y}{X}$ is the $n$-fold cover
associated to $\eta$. We need to show that $\map\pi PY$ is a principal
$G$-bundle. 
If $p\in P$ is the equivalence class of $r\in R$,
we let $gp$ be the equivalence class of
$$
  {(\id;1,\dots,1,g)\cdot r}.
$$
This action of $G$ on $P$ is well-defined, because
the elements of $\{1\}^{n-1}\times G$ commute with those of $S_{n-1}\lwr G$
in $S_n\lwr G$. 
To identify the fibers of $P$ over $Y$ and over $X$, we note that we
have a bijective map 
\begin{eqnarray*}
(S_{n-1}\lwr G)\medspace\backslash\medspace(\Sn\lwr G)
&\longleftrightarrow&
(S_{n-1}\backslash S_n)\times G\\
\overline{(\sigma;g_1,\dots,g_n)}&\longmapsto &
(\overline{\sigma},g_n). 
\end{eqnarray*}
Here $\overline{x}$ denotes the coset of $x$.
We conclude that $\pi$ is indeed a principal $G$-bundle.
It is straight-forward that the composites of the two functors
$$R\mapsto(P,Y)\quad\text{and}\quad
(P,Y)\mapsto R$$
are naturally isomorphic to the respective identity functors.
\end{Pf}{}
\begin{Cor}
  We have an equivalence of groupoids
  \begin{eqnarray*}
    \coprod_{k\geq 0}\mI^n(S_k\lwr \uG) & \simeq & \on{\uG-Cov^{fin}}.
  \end{eqnarray*}
  The graded algebra $S_{\on{n-Class}}(\uG)$ is the
  algebra of invariant functions on $\on{\uG-Cov^{fin}}$. The element $c_k$ is the
  function
  \begin{eqnarray*}
    c_k (Y\xrightarrow\phi T^n) & = &
    \begin{cases}
      1&\text{if $Y$ is connected}\\0&\text{else.}
    \end{cases}
  \end{eqnarray*}
  The element $f_k$ is the function
  \begin{eqnarray*}
    f_k (Y\xrightarrow\phi T^n) & = &
    \prod_{Y=\coprod Y_i}(-1)^{k_i-1},
  \end{eqnarray*}
  where the product is over the connected components of $Y$ and
  $\phi\at {Y_i}$ is a $k_i$-fold cover of $T^n$.
\end{Cor}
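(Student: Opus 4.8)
The plan is to read the graded ring $S_{\on{n-Class}}(\uG)$ entirely on the side of finite covers of the torus, deducing the four assertions from the constructions already in place. For the equivalence of groupoids I would apply \eqref{eq:Prin-I} to $S_k\lwr\uG$, giving $\mI^n(S_k\lwr\uG)\simeq\on{Prin}_{S_k\lwr\uG}(T^n)$, and then invoke the bundle dictionary of Lemma \ref{lem:Cov(G)}(a). Although that lemma is phrased for a group, its proof only forms numbering bundles and quotient bundles and so applies verbatim to a principal $\uG$-bundle; it identifies $\on{Prin}_{S_k\lwr\uG}(T^n)$ with $\on{\uG-Cov}_k(T^n)$. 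Taking the coproduct over $k$ and using that a finite cover is the coproduct of its degree-$k$ parts gives $\coprod_k\mI^n(S_k\lwr\uG)\simeq\on{\uG-Cov^{fin}}$.

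To identify $S_{\on{n-Class}}(\uG)$ with invariant functions, recall that $\on{n-Class}(S_k\lwr\uG)=H^0(B\mI^n(S_k\lwr\uG);R)$ is the ring of $R$-valued functions on $[\mI^n(S_k\lwr\uG)]$, i.e.\ on isomorphism classes of degree-$k$ covers; the infinite product over $k$ is therefore the graded module of invariant functions on $\on{\uG-Cov^{fin}}$, graded by covering degree. It remains to match the cross product with the product induced by disjoint union of covers, and this is exactly what the two factors of the cross product do: $\kappa$, built from \eqref{eq:S_nxS_m}, sends $g\otimes h$ to the external product on pairs of covers, while the fold transfer $S_{M_*}(f)$ is the pushforward \eqref{eq:H^0-transfer} along the map that identifies the two copies of $\uG$, which by Lemma \ref{lem:Cov(G)}(b) is disjoint union. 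In particular the characteristic functions $\chi'_{[C]}$ of connected covers cross-multiply by disjoint union, recovering the polynomial description given earlier for $\uG=\pt$.

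With the function picture fixed, the formulas for $c_k$ and $f_k$ are the exponential formula for the groupoid $\on{\uG-Cov^{fin}}$. The juxtaproduct unit $\mathbf 1=\sum_k 1_k t^k$ is the constant function $1$ on all covers. Using unique decomposition into connected components, together with the fact that cross-powers of the $\chi'_{[C]}$ are characteristic functions of disjoint copies, I would establish the cross-product identity $\mathbf 1=\exp\bigl(\sum_{k\geq1}c_k t^k/k\bigr)$, where $c_k$ is the indicator of connected degree-$k$ covers; differentiating its logarithm gives $\frac d{dt}\ln\mathbf 1=\sum_{k\geq1}c_k t^{k-1}$, which is the asserted formula for $c_k$. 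For $f_k$, the defining relation $\mathbf 1\times f(-t)=1$ exhibits $f(-t)$ as the cross-inverse of $\mathbf 1$, so $f(t)=\exp\bigl(\sum_{k\geq1}(-1)^{k-1}c_k t^k/k\bigr)$; evaluating this on $Y=\coprod_i Y_i$ componentwise, a connected $Y_i$ of degree $k_i$ contributes $(-1)^{k_i-1}$, whence $f_k(Y)=\prod_i(-1)^{k_i-1}$.

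The step I expect to be the main obstacle is the normalization in the exponential formula: I must show that the automorphism weights in the transfer \eqref{eq:H^0-transfer} make the coefficient of a connected cover in $c_k$ exactly $1$, rather than $|\Aut C|$ or its reciprocal, so that the clean form of the identity holds. Concretely this is the statement that the groupoid-cardinality generating function of $\on{\uG-Cov^{fin}}$ is the exponential of that of its connected objects; the $1/k$-type factors appearing on both sides are the reciprocal automorphism orders of the connected covers, and they must cancel against the weights in \eqref{eq:H^0-transfer}. I would check this first for $\uG$ a group, where under the dictionary of \ref{sec:linear-and-projective} it becomes the classical identity $\sum_k h_k t^k=\exp\bigl(\sum_{k\geq1}p_k t^k/k\bigr)$, and then pass to a general groupoid using $\uG\simeq\coprod_{[x]}\Stab(x)$ and the splitting \eqref{eq:S_nxS_m}.
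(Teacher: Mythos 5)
The first half of your proposal is right, and it is exactly the paper's (implicit) argument: the Corollary carries no separate proof in the paper because it is meant as the immediate combination of \eqref{eq:Prin-I}, applied to $S_k\lwr\uG$, with Lemma \ref{lem:Cov(G)}, together with the observation that under this dictionary the cross product on $S_{\on{n-Class}}(\uG)$ becomes, by part (b) of the Lemma and the transfer formula \eqref{eq:H^0-transfer}, the operation induced by disjoint union of covers. Your remark that the Lemma, though stated for a group, applies verbatim to a groupoid $\uG$ is also something the paper itself tacitly uses.

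The gap is in the exponential-formula step, i.e.\ precisely in the normalization you flag as the main obstacle, and which you then resolve in the wrong direction. Write $u_k$ for the indicator of connected degree-$k$ covers. The identity you propose, $\mathbf 1=\exp_\times\bigl(\sum_{k\geq1}u_k\,t^k/k\bigr)$, is false: the automorphism weights are already built into the cross product through \eqref{eq:H^0-transfer}, and must not be inserted a second time as the factors $1/k$. Test it for $n=1$, $\uG=\pt$ in degree $2$, recording values of functions at the pair (trivial double cover, connected double cover): formula \eqref{eq:H^0-transfer} gives $u_1\times u_1=(2,0)$, so the $t^2$-coefficient of your right-hand side is $\tfrac12u_2+\tfrac12\,u_1\times u_1=(1,\tfrac12)$, whereas $1_2=(1,1)$; by contrast $u_2+\tfrac12\,u_1\times u_1=(1,1)$. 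Thus the correct identity is $\mathbf 1=\exp_\times\bigl(\sum_{k\geq1}u_k\,t^k\bigr)$, with no $1/k$'s. Equivalently, under the class-function analogue of the Schur--Weyl dictionary of Section \ref{sec:linear-and-projective} the indicator $u_k$ corresponds to $p_k/k$, not to $p_k$, precisely because of the weight $1/|\Aut|=1/k$; so the classical identity $\sum h_kt^k=\exp(\sum p_kt^k/k)$ that you invoke pulls back to the version without the $1/k$'s, not to yours. This is not a slip that can be patched while keeping your conclusion: with the correct identity and the definition of $c_k$ in Section \ref{sec:elements}, one gets $\sum_k c_kt^{k-1}=\frac d{dt}\ln\mathbf 1=\sum_k k\,u_k\,t^{k-1}$, i.e.\ $c_k$ takes the value $k$, not $1$, on connected covers --- which is also the normalization forced by the paper's own $R(S_n)$ example, where $\langle\varrho,c_k\rangle_{S_k}=\chi_\varrho((1,\dots,k))$ requires the value $k$ on $k$-cycles. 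So what the cover picture actually proves is the indicator description for the $t^k$-coefficient of $\ln\mathbf 1$ itself (no derivative); the Corollary's $c_k$-formula as literally stated is off from the definition by that factor of $k$, and no correct argument can close that gap. The $f_k$-formula, on the other hand, is genuinely correct ($f_k$ corresponds to $e_k$, i.e.\ to the sign-type function $\prod_i(-1)^{k_i-1}$, and follows directly from $\mathbf 1\times f(-t)=1$), but your derivation of it routes through the same false identity and only lands on the right answer because the ``evaluate componentwise'' step leaves the automorphism bookkeeping unexamined.
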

\subsubsection{Product decompositions}
\begin{Def}
  Let $\mI(\uG)[\xi^{\frac1k}]$
  be the groupoid obtained from $\mI(\uG)$ by adding the additional arrows
  $$
    (x,g)\xrightarrow{\phantom{xx}g^{\frac1k}\phantom{xx}} (x,g),
  $$
  subject to the relations 
  \begin{eqnarray*}
    (g^{\frac1k})^k = g  & \quad\text{ and }\quad & hg^\frac1k = (h\inv gh)^{\frac1k}h.
  \end{eqnarray*}  
\end{Def}
Then we have the equivalence of groupoids
\begin{eqnarray}
  \label{eq:cycle_decomposition}
  \coprod_{n\geq 0}\mI(S_n\lwr\uG) &
  \xleftarrow{\phantom{xx}\sim\phantom{xx}} &
  \prod_{k\geq1}\(\coprod_{m\geq0}S_m\lwr\(\mI(\uG)[\xi^\frac1k]\)\)
\end{eqnarray}
discussed, for instance in \cite{Ganter:Orbifold_K-Tate} (following
\cite{Dijkgraaf:Moore:Verlinde:Verlinde}).
%\cite{Lupercio:Uribe:Xicotencatl},\cite{Tamanoi}). 
As an immediate consequence of \eqref{eq:cycle_decomposition}, we obtain
the well known identity
\begin{eqnarray*}
  \sum_{n\geq0}\dim R(S_n)_\CC\,\, t^n & = & \prod_{k\geq 1}\frac1{1-t^k}.
\end{eqnarray*}
One step higher, we obtain a bivariantly natural isomorphism
\begin{eqnarray*}
  S_{2Class}(\uG) & \cong & \bigotimes_{k\geq 1}S_{1Class}\(\mI(\uG)[\xi^\frac1k]\)
\end{eqnarray*}
and, in particular,
\begin{eqnarray*}
  S_{2Class}(\pt) & \cong & \bigotimes_{k\geq 1}S_{1Class}\(\ZZ/k\ZZ\).
\end{eqnarray*}
A similar formula holds for Tate $K$-theory
\cite{Ganter:Orbifold_K-Tate}, but
the product decomposition \eqref{eq:cycle_decomposition} is
not compatible with the $SL_2\ZZ$-action.
\medskip

\noindent
{\bf Question:} 
Let
\begin{eqnarray*}
  \alpha_n & = &  p_1(\varrho_n)
\end{eqnarray*}
be the
first Pontrjagin class of the permutation representation.
It would be interesting to know whether there is a product
decomposition for 
\begin{eqnarray*}
  S_{K_{\!Tate}}^- & = & \bigoplus_{n\geq 0}^\wedge K_{\!Tate}^{\alpha_n}(S_n)
\end{eqnarray*}
in terms of the $S^\pm_R(\ZZ/k\ZZ)$ of Section
\ref{sec:linear-and-projective}. 
\subsubsection{$G$-modular functions}
In the case of $G$-modular functions, we have the graded rings
\begin{eqnarray*}
  S_{Mod}& = & \bigoplus^\wedge_{n\geq 0}\Gamma\(\mM_{S_n},\mathscr O_{\mM_{S_n}}\)
\end{eqnarray*}
and
\begin{eqnarray*}
  S^-_{Mod}& = & \bigoplus^\wedge_{n\geq 0}\Gamma\(\mM_{S_n},\mL^{p_1(\varrho_n)}\).
\end{eqnarray*}
Recall that $\mM_{S_n}$ 
is the moduli space parametrizing principal $S_n$-bundles over complex
elliptic curves or, equivalently, $n$-fold covers of complex elliptic curves.
\begin{Exa}\label{exa:tau+b/d}
  Let $\Gamma\sub\langle\tau,1\rangle$ be the sublattice generated by
  $d$ and $\tau+b$ with $0\leq b<d$. Then the degree $d$-cover
  \begin{eqnarray*}
    \phi\negmedspace :\CC/\Gamma & \longrightarrow & \CC/\langle\tau,1\rangle
  \end{eqnarray*}
  has $S_d$-monodromy $\varsigma_d$ along the circle $[1,0]$ and
  $\varsigma^b$ along $[0,\tau]$. The source of $\phi$ is isomorphic
  to $\CC/\langle\tau',1\rangle$ with $\tau'=(\tau+b)/d$.
\end{Exa}
\begin{Exa}\label{exa:atau}
  The degree $a$ isogeny
  \begin{eqnarray*}
    \CC/\langle a\tau,1\rangle & \longrightarrow & \CC/\langle\tau,1\rangle
  \end{eqnarray*}
  has $S_a$-monodromy $1$ along $[1,0]$ and $\varsigma_a$ along $[0,\tau]$.
\end{Exa}
Write
  \begin{eqnarray*}
    \mM_{S_n\lwr G} & \cong & \widetilde{\mathcal
      H}_{G,n}\,\sqcup\,\mM^{dec}_{G,n},
  \end{eqnarray*}
  where $\widetilde{\mathcal H}_{G,n}$ parametrizes the covers with
  connected total space and 
  $\coprod\mM^{dec}_{G,n}$ are the decomposable covers.
Then the function $c_n$ is 
\begin{eqnarray*}
      c_n\at{\widetilde H_n} \,\, \equiv \,\, 1 & \quad\text{ and }\quad & 
      c_n\at{\mM^{dec}_n} \,\,\equiv \,\, 0.
\end{eqnarray*}
It is believed that $S_{Mod}^-$ has more interesting elements than the
untwisted $S_{Mod}$. For instance, it seems reasonable to expect
transfers of the generalized Moonshine functions to turn up as
elements of $S_{Mod}^-$.
\section{Global power functors}
\subsection{Definition}
Let $A$ be a commutative ring with $1$, and let $M$  be a 
global Green functor with values in $\on{A-mod}$. %$(\mA,\tensor,A)$.
\begin{Def}\label{power-operations-Def}
  A (total) power operation on $M$ is a bivariantly natural, non-additive,
  transformation 
  \begin{eqnarray*}
    P\negmedspace : M&\Longrightarrow & S_M
  \end{eqnarray*}
  satisfying
  \begin{description}
  \item[Exponentiality:] The map
    \begin{eqnarray*}
      P_\emptyset\negmedspace : M(\emptyset) & \longrightarrow &
      M(\pt) 
    \end{eqnarray*}
    maps $0$ to $1\in A$, and 
    \begin{eqnarray*}
      P_{\uG\sqcup\uH}\negmedspace : M(\uG)\oplus M(\uH) &
      \longrightarrow & S_M(\uG\sqcup\uH)
    \end{eqnarray*}
    maps $(a,b)$ to $\kappa(P_\uG(a)\tensor P_\uH(b))$.
  \item[Comodule axiom:] We have
    \begin{eqnarray*}
      P\circ P = \nabla\circ P\negmedspace :M&\Longrightarrow & S_{S_M}
    \end{eqnarray*}
    and 
    \begin{eqnarray*}
      \nu\circ P & = & \id_M.
    \end{eqnarray*}
  \end{description}
  A {\em global power functor} is a global Green functor $M$ together with a
  total power operation $P$.
\end{Def} 
\subsubsection{Consequences}
Writing
\begin{eqnarray*}
  P &=& \sum_{n=0}^{\infty}P_n\,t^n,
\end{eqnarray*}
we have
\begin{description}
  \item[Cartan formula:] $P(0) = 1_0$ and $P(a+b) = P(a)\times
    P(b)$ (cross product). 
  \item[Low degrees:] $P_1(x)=x$ and $P_0(x) = 1$. 
  \item[External product:] 
  $$
    \kappa(P_j(x)\tensor P_k(y)) = \res{S_{j+k}}{S_j\times S_k}(P_{j+k})(x).
  $$
  \item[Composition:]
  $$
    P_j(P_k(x)) = \res{S_{jk}}{S_j\smalllwr S_k}(P_{jk}(x))
  $$
  \item[Multiplicativity:] 
    \begin{eqnarray*}
      P_j(1) = 1&\quad\text{ and }\quad& P_j(ab) = P_j(a) P_j(b)    
    \end{eqnarray*}
  (juxtaproduct).
  \item[External products:]
  Let
    $$\map\delta{S_j\lwr(\uG\times\uH)}{(S_j\lwr\uG)\times(\Sj\lwr\uH)}$$
    be the map induced by the diagonal inclusion of $\Sj$ in
    $\Sj\times\Sj$. Then we have
    $$
      P_j(\kappa(x\tensor y)) = \res{}\delta\medspace\kappa(P_j(x)\tensor P_j(y)).
    $$
\end{description}
%
%\subsubsection{Logarithms}
\subsection{$\tau$-rings and $\lambda$-rings}
Let $M$ be a global power functor whose K\"unneth maps are
isomorphisms. Following Hoffman \cite{Hoffman:tau-rings}, 
we define the total $\tau$-operation
$$
  \tau\negmedspace : M\uG\xrightarrow{\phantom{xxx}P\phantom{xxx}} S_M(\uG)
  \xrightarrow{\phantom{xxx}\delta^*\phantom{xxx}} S_M\tensor M\uG,
$$
and the internal operations $\tau^a$, with $a\in M(S_n)$,
$$
  \tau^a\negmedspace :
  M\uG\xrightarrow{\phantom{xxx}\tau_n\phantom{xxx}} M(S_n)\tensor M\uG
  \xrightarrow{\phantom{xxx}\langle-,a\rangle\tensor\id\phantom{xxx}}  M\uG,
$$
where
$$
  \tau=\sum_{n\geq 0}\tau_n\,t^n.
$$
%\begin{eqnarray*}
%  \tau^a\negmedspace : M\uG & \longrightarrow & M\uG \\
%                         x  & \longmapsto     & \langle\tau_n(x),a\rangle.
%\end{eqnarray*}
\begin{Rem}[Variations]
  If each $M(S_n)$ is self-dual via $\langle -,-\rangle$ then the
  internal operations $\tau^a$ with $a$ in (a basis of) $M(S_n)$
  determine $\tau_n$. If there is no K\"unneth isomorphism then
  $\tau_n$ takes values in $M(S_n\times \uG)$. In this case,
  one can still define the internal operations, using the push-forward
  along the map $\map{pr_2}{S_n\times \uG}{\uG}$. One can also replace
  $\langle-,a\rangle$ by any $M(pt)$-module map $\map\phi{M(S_n)}N$ to obtain an
  operation $\tau^\phi$ with values in $N\tensor M(\uG)$. This
  variation is useful when the inner product is not defined.
\end{Rem}
Hoffman proves that these operations satisfy the $\tau$-ring axioms:
\begin{description}
\item[Cartan formula:] $\tau(0)=1_0\tensor1$ and 
  \begin{eqnarray*}
    \tau(x+y) & = & \tau(x)\times\tau(y),
  \end{eqnarray*}
  where $\times$ is the cross product tensored with multiplication in
  $M(\uG)$. In particular, the map
  \begin{eqnarray*}
    M(\uH)\oplus M(\uG)&\longrightarrow & S_M(\uH)\,\,
    \widehat\tensor\,\,
    S_M\tensor M(\uG)\\
    (x,y) & \longmapsto & P(x)\tensor \tau(y) 
  \end{eqnarray*}
  is bilinear from $+$ to $\times$, yielding a unique exponential
  extension
  \begin{eqnarray*}
    M(\uH)\tensor M(\uG)&\longrightarrow & S_M(\uH)\,\,\widehat\tensor
    \,\,S_M\tensor M(\uG),
  \end{eqnarray*}
  which we will denote $P\tensor\tau$.
\item[Low degrees:] $\tau_0(x) = 1_0\tensor 1$ and $\tau_1(x) =
  1_1\tensor x$.
\item[External products:]
  \begin{eqnarray*}
    m\negmedspace : \tau_j(x)\tensor  \tau_k(x) & \longmapsto &
    \res{S_{j+k}}{S_j\times S_k}\tau_{j+k}(x),
  \end{eqnarray*}
  where $m$ is $id_{M(S_j\times S_k)}$ tensored with multiplication in $M(\uG)$.
\item[Composition:]
  \begin{eqnarray*}
    (P\tensor\tau)_j\circ \tau_k(x) &\longmapsto &
    \res{S_{jk}}{S_j\lwr S_k} \tau_{jk}(x)
  \end{eqnarray*}
  under the map
  \begin{eqnarray*}
  \!\!\!\!\!\!\!\!\!\!
   (id\cdot\Pi^*)\tensor id:
    M(S_j\lwr S_k)\tensor M(S_j)\tensor M(\uG) &\longrightarrow &
        M(S_j\lwr S_k)\tensor M(\uG), 
  \end{eqnarray*}
  where $\longmap\Pi{S_j\lwr S_k}S_j$ is the projection.
\item[Multiplicativity:] $\tau_j(1) = 1_j\tensor 1$ and $\tau_j(xy)
  =\tau_j(x)\cdot\tau_j(y)$ (juxtaproduct). 
\end{description}
Still following Hoffman, we find that consequences of these axioms include
\begin{description}
\item[Universal maps:] For fixed $x\in M(\uG)$, we have a ring map
  \begin{eqnarray*}
    U_x\negmedspace :S_M & \longrightarrow & M(\uG) \\
    a   & \longmapsto     & \tau^a(x)
  \end{eqnarray*}
  sending $1\in M(S_1)$ to $x$. Here the product on the source is the
  cross product.
\item[Outer plethysm:] Composition of the internal operations is given
  by the formula
  \begin{eqnarray*}
    \tau^b\circ\tau^a & = & \tau^{b\vee a}
  \end{eqnarray*}
  with 
  \begin{eqnarray*}
    \vee:R(S_j)\tensor R(S_k) & \longrightarrow & R(S_{jk})\\
    b\vee a & = & \nabla_*\(P_j(a)\cdot \Pi^*(b)\)
  \end{eqnarray*}
  the {\em outer plethysm} \cite[Prop.4.5]{Hoffman:tau-rings}. 
\end{description}
\begin{Def}
  A {\em $\tau$-ring} with respect to the global power functor $M$ is
  a commutative and unitary $M(\pt)$-algebra $E$ together with an
  operation  
  \begin{eqnarray*}
    \tau\negmedspace : E & \longrightarrow & S_M \tensor E
  \end{eqnarray*}
  satisfying the $\tau$-ring axioms.
  If $M$ is clear from the context
  {\em `$\tau$-ring'} will always be meant with respect to $M$.
\end{Def}
The discussion above implies that the ring $(S_M,\times,1_0)$
with the operations
\begin{eqnarray*}
  \tau_j(b)   & = & (\nabla,\Pi)_*\, P_j(b)\quad\quad\text{and}\\
  \tau^a(b) & = & a\vee b 
\end{eqnarray*}
is the free $\tau$-ring on the generator $1_1$.
Here $\nabla$ is the inclusion of $S_j\lwr S_k$ in $S_{jk}$ and $\Pi$ is
the projection to $S_j$, where the degree of $b$ is $k$.

\medskip
On any $\tau$-ring $E$, we have the operations
\begin{eqnarray*}
  \on{sym}^n & = & \tau^{1_n} \quad\quad \text{ symmetric powers,} \\
  \lambda^n  & = & \tau^{f_n} \quad\quad\text{ $\lambda$-operations, and} \\
  \psi^n     & = & \tau^{c_n} \quad\quad\text{ Adams operations.}
\end{eqnarray*}
Here $1_n$, $f_n$ and $c_n$ are as in Section \ref{sec:elements}.
The $\lambda^n$ make $E$ a (not necessarily special)
$\lambda$-ring. In particular, the Adams operations are additive. 
We also have the operations
\begin{eqnarray*}
    \psi^H     & = & \tau^{\eta(X_H)},
\end{eqnarray*}
where $H$ is a finite group and $\eta(X_H)$ is as in Example
\ref{exa:chromatic_characters}. Write $P_H(x)$ for the restriction of
$P_n(x)$ along the inclusion
\begin{eqnarray*}
  H\times\uG &\sub & S_n\lwr \uG,
\end{eqnarray*}
and let $\map{\epsilon_H}{H\times\uG}{\uG}$ be the projection to the
second factor. Then, by Frobenius
reciprocity, 
\begin{eqnarray*}
  \psi^H(x) & = & (\epsilon_H)_!\(P_H(x)\).
\end{eqnarray*}
%The {\em total ($M$-theoretic) symmetric power} is the power series
%\begin{eqnarray*}
%  S_t(x) & = & \sum_{n\geq 0}\on{sym}^n\, t^n.
%\end{eqnarray*}
%This yields the structure of a (non-special) $\lambda$-ring on
%$M(\uG)$, whose $\lambda$-operations are defined by
%$$
%
%  \sum_{n\geq 0}\lambda^n(x)\,t^n\,\, = \,\, 
%  \Lambda_t(x) \,\, := \,\, S_{-t}(x)\inv, 
%$$
%so that we have
%\begin{eqnarray*}
%  \sum_{j=0}^n \lambda^{n-j}(x)\on{sym}^j(x) & = &0
%\end{eqnarray*}
%and the generating function for the Adams operations $\psi^k$ 
%\begin{eqnarray*}
%  \sum_{k\geq 1}\psi^k(x) \,t^{k-1} & = & \frac d{dt}\,ln \sum_{n\geq
%    0}\on{sym}^n(x)\, t^n.
%\end{eqnarray*}

\subsection{Examples}\label{Powerops-examples-Sec}
\subsubsection{(Special) $\lambda$-rings}
The representation ring functor $\uG\mapsto R(\uG)$
is a global power functor with
$$P_n(V) = V^{\tensor n}$$ \cite{Atiyah:Power_operations}.
Here $sym^n$ and $\lambda^n$ are the familiar symmetric and exterior
powers, and $\psi^n$ are the Adams operations.
Hoffman shows that
$R$-theoretic $\tau$-rings are the same thing as special
$\lambda$-rings.\footnote{This term is somewhat old-fashioned. Nowadays, `special $\lambda$-rings' are often
  simply referred to as `$\lambda$-rings'. Since we are interested also in
  non-special examples, we stick with the old-fashioned terminology.}
The universal map
\begin{eqnarray*}
  U_{\id}: S_R & \longrightarrow & R(U) \quad\text{or}\\
  U_{X}: S_R & \longrightarrow & {\on{lim}\limits_m}\, \ZZ[x_1,\dots,x_m]^{S_m}
\end{eqnarray*}
$X=x_1+x_2+x_3+\dots$ is the Schur-Weyl map.\footnote{Note that there
  is a mismatch here with our conventions for the inner 
products: to get the correct map, one needs $\langle
\varrho,\vartheta\rangle = Hom_G(\varrho,\vartheta)$. Switching
conventions amounts to passing from $V_\lambda$ to $V_\lambda^*$.}  
\subsubsection{Super $\lambda$-rings, Sergeev-Yamaguchi duality}
\begin{Def}
  A $\ZZ/2\ZZ$-graded groupoid is a groupoid $\uG$ together with a map
  of groupoids
  \begin{eqnarray*}
    \uG & \longrightarrow & \ZZ/2\ZZ.
  \end{eqnarray*}
\end{Def}
For such $\uG$, we let $R(\uG)$ be the Grothendieck group of
super-representations of $\uG$ with the additional relation 
\begin{eqnarray*}
  [V] + [\Pi V] & = & 0,
\end{eqnarray*}
where $\Pi$ is the shift functor.\footnote{This differs from the
  convention in \cite{Jozefiak:Characters_of_projective} and
  \cite[12.18 ff.]{Kleshchev:Linear_and_projective} by a sign.} We  
recall from \cite{Ganter:Kapranov:Symmetric} that $R(\uG)$ itself may be viewed the degree
zero part of a
superring: let $\mC_1 = \CC[\xi_1]/\xi_1^2$ be the Clifford
superalgebra $|\xi_1| = 1$, and write $\tensor_s$ for the super tensorproduct. Then
\begin{eqnarray*}
  R(\uG)_1 & = & R(\CC[\uG] \tensor_s\mathcal C_1)
\end{eqnarray*}
The product map
\begin{eqnarray*}
  \circledast\negmedspace: R(\uG)_\bullet \tensor_s R(\uH)_\bullet &
  \xrightarrow{\phantom{XX}\cong\phantom{XX}}
  & R(\uG\times\uH)_\bullet
\end{eqnarray*}
is defined as follows (compare \cite[12.21]{Kleshchev:Linear_and_projective}): If 
$V$ and $W$ are irreducible and
$deg[V]\cdot deg[W] = 0$, then 
\begin{eqnarray*}
  V\circledast W  & = &  V\tensor_s W.
\end{eqnarray*}
If $deg(V)=deg(W) = 1$, then $V\circledast W$ is the inverse image of
$V\tensor_s W$ under the
periodicity isomorphism
\begin{eqnarray*} 
  R(\uG) & \xrightarrow{\phantom{XX}\cong\phantom{XX}} & R(\CC[\uG]\tensor_s \mC_2), 
\end{eqnarray*}
$\mC_2 = \mC_1^{\tensor_s2}$.
In other words, 
\begin{eqnarray*}
  V\circledast W & = & (V\tensor_s W)^+
\end{eqnarray*}
is the $+1$-eigenspace of $i\xi_1\xi_2$ inside $V\tensor_s W$.

The product $\circledast$ makes $R(-)_\bullet$ a globally defined
Green functor on the category of $\ZZ/2\ZZ$-graded groupoids. In
particular, each $R(\uG)_\bullet$ is a superring.

We have power operations on $R_\bullet$, defined as follows:
\begin{eqnarray*}
  P^+_k\negmedspace: R(\uG)_0 & \longrightarrow & R(S_k\lwr
  \uG)_{0}\\
  P^-_k\negmedspace: R(\uG)_1 & \longrightarrow & R^-(S_k\lwr
  \uG)_{\ol k}\\
  V &\longmapsto & V^{\circledast k},
\end{eqnarray*}
where $\ol k$ is the parity of $k$, and  $S_k$ acts by permuting the
factors. In the case of $P^-$ this 
makes $V^{\circledast k}$ a spin representation of $\widetilde S_k$ (in a
somewhat non-canonical manner, see \cite{Yamaguchi:Duality}).

$R(\uG)$ is the prototype of a {\em super $\lambda$-ring}: a super ring
$A_\bullet$ together with operations
\begin{eqnarray*}
  \tau^+\negmedspace : A_0 & \longrightarrow & \(A_\bullet\tensor_s S_{R_\bullet}^+\)_0\\
  \tau^-\negmedspace : A_1 & \longrightarrow & A_\bullet\tensor_s S_{R_\bullet}^-
\end{eqnarray*}
with $\tau^-_k$ taking values in degree $k$ and
such that $\tau = (\tau^+,\tau^-)$ satisfies the $\tau$-ring axioms.
\begin{Exa}[Sergeev-Yamaguchi duality]
  Let $A=R(\mathfrak q(n))_\bullet$ be the representation ring of the
  queer Lie superalgebra $\mathfrak q(n)$, and
  let $V$ be the defining representation of $\mathfrak q(n)$. The main
  result of \cite{Yamaguchi:Duality} may be summarized as
  \begin{eqnarray*}
    [V^{\circledast k}] & = & \sum_{\stackrel{\lambda\,\, \dashv\,\,
        k}{strict}} [V_\lambda]\circledast [W_\lambda],
  \end{eqnarray*}
  where $V_\lambda$ runs through the irreducible spin superrepresentations
  of $\widetilde S_n$ and the $W_\lambda$ are pairwise distinct
  irreducible superrepresentations of $\mathfrak q(n)$.
  It follows that the universal map
  \begin{eqnarray*}
    U_V^-\negmedspace : S_{R_\bullet}^- & \longrightarrow &
    R(\mathfrak q(n))_\bullet \\
%    a & \longmapsto & \langle \tau^-V,a\rangle
  \end{eqnarray*}
  maps $a=[V_\lambda]$ to 
  \begin{eqnarray*}
    (\tau^-)^{[V_\lambda]}(V)& =& [W_\lambda]. 
  \end{eqnarray*}
\end{Exa}
\begin{Rem}
  We chose here to view $R(\uG)_\bullet$ as a $\ZZ/2\ZZ$-graded
  theory. This made it a natural thing to consider {\em `Yamaguchi
    power operations'} $V\mapsto V^{\circledast k}$ as
  above. Yamaguchi explains in \cite[Sec.3]{Yamaguchi:Duality} how to
  translate between his picture and that of Sergeev: in the language
  of \cite[Sec (1.7)]{Ganter:Kapranov:Symmetric}, the {\em `Sergeev power
    operations'} act on the $\mathbb
  N$-graded theory  
  \begin{eqnarray*}
    R(\uG)_m & \longrightarrow & R^-(S_k\times \uG)_{mk}\\
    V &\longmapsto & V^{\tensor_s k}
  \end{eqnarray*}
  and correspond to the Yamaguchi power operations under the 
  periodicity isomorphism.
\end{Rem}

\subsubsection{$n$-special $\lambda$-rings}
\label{sec:nlambda}
As above, we view 
$$
  \chi \in  \on{n-Class}(\uG,R)^{SL_n(\ZZ)}
$$ 
as ${SL_n(\ZZ)}$-invariant global function on $Prin_{\uG}(T^n)$. 
\begin{Def}
  The function $P_k(\chi)$ on
  $\on{\uG-Cov_k}(T^n)$ is defined by
  \begin{eqnarray*}
    (P_k\chi)\(P\xrightarrow{\pi}Y\xrightarrow{\phi} T^n\) & = &
    \prod_{Y=\coprod Y_i}\chi(P\at{Y_i}\xrightarrow{\pi_i}Y_i),
  \end{eqnarray*}
  where the product runs over all connected components of $Y$.
\end{Def}
To make sense of the factors on the right-hand side, 
pick, for each $i$, a basis of the torus $Y_i$. Since $\chi$ is
invariant under the $Aut(\ZZ^n)$-actions, its value at $\pi_i$ is
independent of this identification $Y_i\cong T^n$.
We will refer to the corresponding $\tau$-rings as {\em $n$-special
  $\lambda$-rings}. 
The internal power operations on an $n$-special
$\lambda$-ring are generated by the {\em `elementary'} power
operations $\tau^{\ul\sigma(\alpha)}$, where the $n$-tuple
$$%
  \xymatrix{
  \ul\sigma(\alpha):\ZZ^n\,\ar@{->>}[rr]^{\,\,\,\,\,\,\alpha} &&
  {\,\,A\,\,\,} \ar@{>->}[rr] && \,\,S_{|A|} 
  }
$$
is viewed as the linear form on $\on{n-Class}(S_{|A|},R)$ sending
$\chi$ to $\chi(\ul\sigma(\alpha))$.
These are calculated as follows: Let
\begin{eqnarray*}
  \varphi\negmedspace : ker(\alpha) & \hookrightarrow &\ZZ^n
\end{eqnarray*}
be the inclusion, and choose any oriented
basis of the lattice $ker(\alpha)$. For $\map\beta{\ZZ^n}\uG$, we
then have
\begin{eqnarray*}
  \(\tau^{\ul\sigma(\alpha)}(\chi)\)(\beta) & = & \chi\(\beta\circ\phi^t\).
\end{eqnarray*}
If $R$ contains $\mathbb Q$, so inner products exist, then we have
symmetric and exterior powers, and the discussion in
\cite[Sec.9]{Ganter:Orbifold_genera} goes through to prove that the Adams
operators of the theory are
\begin{eqnarray*}
  \psi^k & = & \sum_{|A|=k}\tau^{\ul\sigma(\alpha)}.
\end{eqnarray*}

\subsubsection{$\beta$-rings and subgroup characters}
The Burnside ring functor $A(\uG)$ is a global power functor via
\begin{eqnarray*}
  {P_k}\negmedspace :[X]&\longmapsto& [X^k].
\end{eqnarray*}
The K\"unneth maps for $A$ are not isomorphisms, but one can still
make sense of a ring with internal $\tau$-operations with respect to
$A$.
These internal operations are generated by the $\beta$-operations 
\begin{eqnarray*}
  \beta_H & = & \tau^{[S_k/H]},  
\end{eqnarray*}
where $H$ ranges over the conjugacy classes of (transitive) subgroups of $S_k$.
This point of view is taken in \cite{Rymer:Power_operations} and was
developed further in \cite{Ochoa:Outer_plethysm}, \cite{Morris:Wensley:Adams_operations},
\cite{Vallejo:The_free_beta-ring}, \cite{Guillot:Adams_operations}, 
\cite{Bouc:Roekaeus}, and \cite{Morava:Santhanam}. 
The resulting formalism is that of $\beta$-rings.
These can be rather complicated objects, Guillot shows that
examples include the stable homotopy groups of spheres and, more
generally, the stable cohomotopy ring of a space.

Let us now turn to the related Mackey functor $SCF$ of subgroup class
functions, where the subgroup characters of permutation
representations take their values. An idea going back to Knutson is to
look for a Schur-Weyl type map out of $\mathbf B$, using $SCF$. For
this, Knutson suggests a definition of Adams operations on $SCF$ and
conjectures that the subgroup character map $\eta^{SCF}$ is a map of
$\lambda$-rings. The above discussion suggests a definition of power
operations (and hence Adams-operations) on $SCF$, which is different
to the structure anticipated by Knutson. The transformation
$\eta^{SCF}$ preserves power operations. However, since the
surjections axiom is not satisfied, $\eta^{SCF}$ does not commute with
transfers along the maps $\map{\epsilon_G}{\uG}1$, so that care must
be taken when trying to use it to analyze internal operations. 

For simplicity, we let $\uG$ be a group $G$.
Let $H$ be a subgroup of $S_n\lwr G$. Then 
we have an $n$-fold covering space $Y$ of $BH$, together with a
principal $G$-bundle $\map\pi PY$.
The space $Y$ is connected if
and only if the image of the projection $H\to S_n$ is transitive. In
that case we have $Y=EH/K$ for some subgroup $K\sub H$ with index
$[H:K] = n$, and $P$ is classified by a map $K\to G$, well defined up
to inner automorphism of $G$. The image of this map defines an element $[K]_G$ of
$[\mathcal S(G)]$.
If $H$ is not transitive, then $H=\prod H_i$ is the product of
transitive subgroups $H_i\sub S_{n_i}$, and we get subgroups $K_i\sub
H_i$ of index $n_i$, each of which gives an element of $[\mathcal S(G)]$. 
\begin{Def}
  Define power operations 
  \begin{eqnarray*}
   P_n: SCF(G) & \longrightarrow & SCF(S_n\lwr G)\\
     \chi&\longmapsto& \(P_n(\chi):[H]  \longmapsto \prod_{i}\chi\([K_i]_G\)  \).
  \end{eqnarray*}
\end{Def}
In the spirit of \cite{Guillot:Adams_operations}, Knutson's
`Schur-Weyl' map should take values in the Burnside ring of the
infinite general linear group of the field with one element.
A possible candidate is the map
\begin{eqnarray*}
  U\negmedspace :\mathbf B & \longrightarrow & \lim_{m\in\mathbb
    N}A(S_m).     
\end{eqnarray*}
Here $[m]$ is the set
with $m$ elements and the defining $S_m$-action,
and $U_{[m]}$ is the map
\begin{eqnarray*}
  U_{[m]}\negmedspace :\mathbf B & \longrightarrow & A(S_m)\\
  \left[S_k/H\right]&\longmapsto &\tau^{\eta_H}([m]).
\end{eqnarray*}
Note that we have used $\tau^{\eta_H}$ instead of $\tau^{[S_k/H]}$, 
where 
\begin{eqnarray*}
  {\eta_H}\negmedspace :{A(S_k\times \uG)}&\longrightarrow &A(\uG)
\end{eqnarray*}
picks out the subset where
$S_k$ acts with orbit type $S_k/H$. 
To study the map $U$, one needs an understanding of the simultaneous
decomposition of 
\begin{eqnarray*}
  [m]^k&=&Map([k],[m])
\end{eqnarray*}
into $S_k$- and $S_m$-orbit types. 
Alternatively, one could modify this construction to take values in
$$
   \lim_{m\in\mathbb N}SCF(S_m). 
$$
\subsubsection{ $G$-modular functions}
%The definition of power operations on $M\!od(\uG)$ is similar to that on
%$\on{2-Class}(\uG,R)^{GL_2\ZZ}$. 
Let $F$ be a global function on $\mM_G$.
Then the global function $P_n(f)$ on $\mM_{\Sn\lwr G}$ is defined as follows: 
Given an elliptic curve $E$ and a principal $\Sn\lwr G$-bundle
$\map\psi PE$, we set
\begin{eqnarray*}
    P_n(F)(\psi) & = & \prod_{Y=\coprod Y_i}F(\pi_i),
\end{eqnarray*}
where the sum is over the connected components of $Y=[\uG\ltimes P]$. 
Here we are taking advantage of the fact that the maps $\map{\phi_i}{Y_i}{E}$ are
isogenies and that this identification is canonical after choosing
basepoints of the $Y_i$, a choice which does not affect $F(\pi_i)$. 
The resulting internal power operations play a big role in
\cite{Ganter:Hecke_operators}. In particular, the Adams operations of the
theory are the $G$-equivariant Hecke operators.
\subsubsection{Rezk's elliptic $\lambda$-rings}
\label{sec:Rezk}
We will work with $q$ inverted.
In \cite{Ganter:Orbifold_K-Tate}, we defined the total power operation on
$K_{\Tate}$ as follows
\begin{eqnarray*}
    P^{Tate}_t (F)(q)  &=& \prod^\times_{k\geq 1}
    P_{t^k}^{Atiyah}\(
    \theta_k(F)\)(q),
\end{eqnarray*}
where $\theta_k(F)(q)$ is the power series over $R\(\mI(\ul G)[\xi^\frac1k]\)$
with character
\begin{eqnarray*}
  (\theta_kF)\(g,h g^\frac bk\,;q \) &=& F\(g,h;q^\frac1k\zeta_k^b\),
\end{eqnarray*}
and $P^{Atiyah}$ is defined by
\begin{eqnarray*}
  P^{Atiyah}_n(Vq^\frac mk) & = & V^{\tensor n}q^{\frac{mn}k}.
\end{eqnarray*}
Again, the Adams operations of the theory are the ($q$-expansions of)
equivariant Hecke operators. The total symmetric power of $[V]\in
R(\uG)\sub K_{Tate}(\uG)$ is Witten's class
\begin{eqnarray*}
  S_{t}^{Tate}(V) &= & \bigotimes_{k\geq 0}S_{t^k}^{Atiyah}(V)
\end{eqnarray*}
(see \cite{Ganter:Orbifold_K-Tate}).
\begin{Def}
  Let $r_n$ be the regular
  representation of the centralizer of ${\varsigma_n}$ 
  and define the
  sequences $\delta_n$ and $\gamma_n$ by letting
  \begin{eqnarray*}
    \delta_n & = & ([1],\varsigma_n)\\     
    \gamma_n & = & ([\varsigma_n],r_n).
  \end{eqnarray*}
  Note that $\delta_n\in K_{Tate}(S_n)$, while $\gamma_n$ does not
  satisfy the rotation condition. Then any $K_{Tate}$ theoretic tau-ring has the
  internal operations
  \begin{eqnarray*}
    \nu^n & = & \tau^{\delta_n}
  \end{eqnarray*}
  (denoted $\psi^n$ in \cite{Rezk:quasi-elliptic}) and the operations
  \begin{eqnarray*}
    \mu^n & = & \tau^{\gamma_n}
  \end{eqnarray*}  
  mapping from $A$ to $A[q^{\frac 1n}]$.
\end{Def}
These operations make our $K_{Tate}$-theoretic tau rings into {\em elliptic
  lambda rings} as defined by Rezk \cite{Rezk:quasi-elliptic}.
In particular, 
$K_{Tate}(\ZZ/N\ZZ)$ is an elliptic lambda ring. This is the ring
$T[N]$ considered in \cite{Rezk:quasi-elliptic}, with
\begin{eqnarray*}
  spec\(K_{Tate}(\ZZ/k\ZZ)\) & \cong & Tate(q)[k]
\end{eqnarray*}
the scheme of $k$-torsion points of the Tate curve.

\begin{Thm}
  The $q$-expansion map of Section \ref{sec:q-exp} is a map of
  global power functors.
\end{Thm}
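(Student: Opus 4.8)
The plan is to verify the three conditions that define a map of global power functors: naturality as a transformation of the underlying Mackey functors, compatibility with the Green-functor (product) structure, and compatibility with the total power operation $P$. The first of these is exactly the $q$-expansion principle invoked in Section~\ref{sec:q-exp}, so I may take it as given, together with the naturality under restrictions and transfers that it encodes. It then remains to treat the multiplicative structure and the power operations.

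For the product structure I would argue on characters. A class in $K_{Tate}(\uG)$ is determined by its character on commuting pairs of automorphisms, that is, on objects of $\mI^2\uG$, and under the correspondence of Section~\ref{sec:q-exp} this character is precisely the $q$-expansion of the modular function $F(g,h;\tau)$ evaluated at $q=e^{2\pi i\tau}$. Since the multiplication on both $Mod$ and $K_{Tate}$ is computed by pointwise multiplication of such characters on $\mI^2(\uG\times\uH)\simeq\mI^2\uG\times\mI^2\uH$, and since $q$-expansion carries a product of modular functions to the product of the corresponding power series, the K\"unneth maps are intertwined; the unit is preserved because the constant function $1$ $q$-expands to the trivial representation. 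Thus the $q$-expansion is a map of Green functors.

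The heart of the matter is compatibility with the operations $P_n$: commutativity of the square whose horizontal arrows are the $q$-expansion maps $Mod(\uG)\to K_{Tate}(\uG)\tensor\CC$ and $Mod(S_n\lwr G)\to K_{Tate}(S_n\lwr G)\tensor\CC$, and whose vertical arrows are the operations $P_n$ on either side. I would prove this by reducing both power operations to the connected, single-cycle case via the decomposition~\eqref{eq:cycle_decomposition}. On the $Mod$ side, a principal $S_n\lwr G$-bundle over an elliptic curve $E$ decomposes into connected covers $\phi_i\colon Y_i\to E$, each an isogeny, and $P_n(F)$ is the product of the values $F(\pi_i)$; on the $K_{Tate}$ side, the factor $P^{Atiyah}_{t^k}\theta_k$ appearing in $P^{Tate}_t$ collects exactly the length-$k$ cycles. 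So it suffices to match a single $k$-cycle, where the connected degree-$k$ cover is the isogeny of Examples~\ref{exa:tau+b/d} and~\ref{exa:atau}: the claim becomes the identity that $F$ pulled back along this isogeny has $q$-expansion equal to $P^{Atiyah}_k(\theta_k F)$. Unwinding the definition of $\theta_k$, the substitution $q\mapsto q^{1/k}\zeta_k^b$ together with the monodromy twist $h\mapsto hg^{b/k}$ is exactly the effect on $q$-expansions of the isogeny $\tau'=(\tau+b)/k$, while $P^{Atiyah}_k$ records the tensor-power representation induced by the cover.

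The main obstacle, and where I would spend the most care, is this local matching: one must check that the monodromy bookkeeping of a connected $k$-fold cover of $E$---its Hermite-normal-form data $(a,b,d)$ with $ad=k$ furnished by Examples~\ref{exa:tau+b/d} and~\ref{exa:atau}---translates under $q$-expansion into precisely the eigenvalue twisting prescribed by $\theta_k$ and the tensor power prescribed by $P^{Atiyah}$. This is a direct but delicate comparison of the modular transformation law~\eqref{eq:moonshine1} with the character formula defining $\theta_k$. Once a single cycle is handled, multiplicativity of $q$-expansion and of both power operations propagates the identity across all connected components and hence to all of $S_n\lwr G$, and the low-degree cases $P_0,P_1$ are immediate from the power-operation axioms.
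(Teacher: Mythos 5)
Your proposal is correct and takes essentially the same route as the paper's proof: reduce to connected covers (isogenies), match the isogeny $\tau\mapsto(\tau+b)/d$ of Example~\ref{exa:tau+b/d} with the twist $\theta_d$ (via the identification of $(\phi,g,h)$ with $(g,hg^{b/d})$) and the isogeny $\tau\mapsto a\tau$ of Example~\ref{exa:atau} with $P^{Atiyah}_a$, then use that an arbitrary isogeny is a composite of these two types. The only difference is cosmetic: you spell out the Mackey- and Green-functor compatibilities, which the paper treats as given by the $q$-expansion principle, while its proof consists exactly of your ``local matching'' step.
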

\begin{Pf}{}
  Let $F\in Mod(\uG)$. 
  It suffices to calculate the $q$-expansion of $\(P_nF\)\at{\widetilde
    H_n}$ in terms of the $q$-expansion of $F$.
  Let $\phi$ be an isogeny onto
  $\CC/\langle\tau,1\rangle$. 
  If $\phi$ is as in Example \ref{exa:tau+b/d} then the discussion
  there implies that 
  \begin{eqnarray*}
    (P_d(F)) \(\phi,(g,h);\tau\)
    & = & F\(g,h;\frac{\tau+b}{d}\).
  \end{eqnarray*}
  This is the same formula as for $\theta_d$.
  The isogeny $\phi$ corresponds to the commuting pair
  $(\varsigma_d,\varsigma_d^b)$ in $S_d$, so that $(\phi,g,h)$ is
  identified with $(g,hg^{\frac bd})\in\mI(\uG)[\xi^\frac1d]$.  

  If $\phi$ is as in Example \ref{exa:atau}, then 
  \begin{eqnarray*}
    (P_a(F))\(\phi,(g,h);\tau\) & = & F(g,h;a\tau).
  \end{eqnarray*}
  This is the same as $P^{Atiyah}_a(F)_{(1,\varsigma_a)}$
  An arbitrary isogeny may be written as composite of two isogenies
  as above.
\end{Pf}

\begin{Def}[Looijenga's symmetric theta functions]
  Let $G$ be a complex reductive algebraic group with maximal torus
  $T$ and Weyl group $W_0$. Let 
  \begin{eqnarray*}
    \widehat T &=& Hom(T,\CC^\times)\\
    \check T &=& Hom(\CC^\times,T)
  \end{eqnarray*}
  be the character and cocharacter lattices of $T$, and let
  $I$ be the minimal positive definite $W_0$-invariant symmetric
  bilinear form on $\check T$ satisfying 
  $\(\forall v\in\check T\)$
  \begin{equation}
    \label{even-Eqn}
    I(v,v)\in 2\ZZ.
  \end{equation}
  Write $$\longmap{I^\sharp}{\check T}{\widehat T}$$ for the adjoint of
  $I$.
  Consider formal power series $f(z,q)$ with exponents of $z$ in
  $\widehat T$ and exponents of $q$ in $\ZZ$, and $
  z^\lambda z^\mu=z^{\lambda+\mu}$ and $q^{m}q^n = q^{m+n}$, and let
  \begin{eqnarray*}
    \Theta_I& = & \left\{f(z,q)\mid \(\forall v\in \check
      T\)\(f(zq^v,q) =
      q^{-\frac{I(v,v)}2}z^{-I^\sharp(v)}f(z,q)\)\right\}. 
  \end{eqnarray*}
  Looijenga's ring of symmetric theta functions is 
  \begin{eqnarray*}
    \Theta^{W_0} & = & \sum_{k\geq0}\Theta_{kI}^{W_0}
  \end{eqnarray*}
  ($W_0$-invariant elements).
\end{Def}
An example is the basic theta function
\begin{eqnarray*}
  \theta_I & = & \sum_{v\in\check T} q^{\frac{I(v,v)}2}z^{I^\sharp(v)}.
\end{eqnarray*}
Elements of $\Theta^{W_0}$ should be interpreted as the sections of
the Looijenga line bundle on $\check T\tensor Tate(q)$ and hence as
the coefficients of $G$-equivariant Tate $K$-theory. These have yet to
be defined, but see \cite{Rezk:Lectures_on_the_Tate_curve}.
The action of isogenies of the Tate curve on Looijenga theta functions
was calculated in \cite[Part
III]{Ando:POECLG}. These should be internal
$\tau$-operations with respect to $K_{Tate}$.

The ring $\Theta^{W_0}$ is the elliptic cohomology analogue of the
rings $R(U(n))$. The basic theta function $\theta_I$ plays the role of
the defining representation $[\CC^n]\in R(U(n))$. Hence we expect a
Schur-Weyl map
\begin{eqnarray*}
  U_{\theta_I}\negmedspace : S_{K_{Tate}} & \longrightarrow & \Theta^{W_0},
\end{eqnarray*}
which should be accessible via Ando's calculations.
It would be interesting to study this map in more detail.

%\subsubsection{The Burnside ring}
%The power operation $P_n$ sends the $\uG$ set $X$ to the $S_n\lwr \uG$
%set $X^n$.
%It is not true that 
%\begin{eqnarray*}
%  B(\uG)\otimes B(\uH)&\longrightarrow &B(\uG\times\uH)
%\end{eqnarray*}
%is an isomorphism, so there is no good theory of Burnside
%$\lambda$-rings. 
\subsubsection{Generalized cohomology theories}
If the generalized cohomology theory $E$ is equipped 
with $\hinf$-structure, then $E(\on{Borel}(\uG))$ is a global power functor. 
This is true for many familiar cohomology
theories, such as ordinary cohomology, 
Lubin-Tate-Morava $E$-theory, 
topological modular forms, 
$K$-theory, and cobordism. 
The standard reference for $\hinf$-spectra is
\cite{Bruner:May:McClure:Steinberger}, for a discussion of power operations on the
Morava-Lubin-Tate theories and the corresponding $\lambda$-ring
operations, see \cite{Ando:Isogenies}, \cite{Baker:Hecke_algebras}, 
\cite{Rezk:Power_operations_for_Morava_E-theory}, \cite{Rezk:Rings_of_power_operations},
and \cite{Ganter:Orbifold_genera}.
\subsubsection{Character maps}
\begin{Prop}
  The character map
  \begin{eqnarray*}
    char\negmedspace : R & \Longrightarrow & 1Class(-,\ZZ[\mu_\infty])^{Aut(\ZZ)}
  \end{eqnarray*}
  preserves power operations.
\end{Prop}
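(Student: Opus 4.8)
The plan is to show that the character map intertwines the total power operations. Both $R$ and the target $\on{1Class}(-,\ZZ[\mu_\infty])^{\Aut(\ZZ)}$ are global power functors, and by the Example preceding the statement $\mathrm{char}$ is already a natural transformation of global Green functors; what remains is to verify $\mathrm{char}\circ P_n = P_n\circ\mathrm{char}$ for every $n$, i.e.\ that the square
$$
\xymatrix{
R(\uG)\ar[r]^{P_n}\ar[d]_{\mathrm{char}} & R(\Sn\lwr\uG)\ar[d]^{\mathrm{char}}\\
\on{1Class}(\uG)\ar[r]_{P_n} & \on{1Class}(\Sn\lwr\uG)
}
$$
commutes. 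By bivariant naturality it suffices to treat $\uG=G$ a finite group and to evaluate both composites on a representation $V$; the two outputs are class functions on $\mI(\Sn\lwr G)$, that is, functions of a conjugacy class $[(\sigma;g_1,\dots,g_n)]$ with $\sigma\in\Sn$ and $g_i\in G$, and I must match them pointwise. (The cases $P_0,P_1$ are fixed by the low-degree axioms and need no checking.)

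Going across the top and down the right, $\mathrm{char}(V^{\tensor n})$ evaluated at $(\sigma;g_1,\dots,g_n)$ is the trace of this element acting on $V^{\tensor n}$. The standard wreath-product trace formula shows that only the cycle structure of $\sigma$ contributes: decomposing $\sigma$ into disjoint cycles $c$, and setting $g_c=g_{i_k}\cdots g_{i_1}$ for a cycle $c=(i_1\cdots i_k)$ (well defined up to conjugacy and cyclic rotation), one obtains
$$
\operatorname{tr}\!\big((\sigma;g_1,\dots,g_n)\mid V^{\tensor n}\big)
\;=\;\prod_{c}\operatorname{tr}(g_c\mid V)
\;=\;\prod_{c}\chi_V(g_c).
$$

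Going down the left and across the bottom, I use the definition of $P_n$ on $n$-class functions as a product over the connected components of the associated cover. Under the identification $\coprod_{k}\mI(\Sn\lwr\uG)\simeq \uG\text{-}\mathrm{Cov}^{\mathrm{fin}}$, equivalently the cycle decomposition \eqref{eq:cycle_decomposition}, the class $[(\sigma;g_1,\dots,g_n)]$ corresponds to a $\uG$-cover $P\to Y\to \SSi$ whose connected components $Y_i$ are indexed by the cycles of $\sigma$: a $k$-cycle yields a connected degree-$k$ cover $Y_i\cong\SSi$ of the base circle, and the $G$-monodromy of $P\at{Y_i}$ is exactly $g_c$. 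Hence $P_n(\chi_V)$ evaluated there equals $\prod_i\chi_V(P\at{Y_i})=\prod_c\chi_V(g_c)$, agreeing with the previous display. Both sides are manifestly invariant under simultaneous conjugation and under $\Aut(\ZZ)=\{\pm1\}$ (which reverses the orientation of each covering circle, replacing every $g_c$ by $g_c^{-1}$ coherently), so the identity descends to the $\Aut(\ZZ)$-invariant subfunctor.

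The only real work is the bookkeeping inside \eqref{eq:cycle_decomposition}: confirming that the $G$-monodromy of the component attached to a cycle $c$ is the cyclic product $g_c$, so that the conventions in the wreath-product trace formula and in \eqref{eq:cycle_decomposition} line up, and checking that the formal $\xi^{1/k}$-data carried by $\mI(\uG)[\xi^{\frac1k}]$ is invisible to $\chi_V$ (it records only $g_c\in G$, not a choice of $k$-th root), so that no spurious roots of unity enter. Once these conventions are matched the two displays coincide, and $\mathrm{char}$ preserves power operations.
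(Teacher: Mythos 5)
Your argument is correct and rests on the same pointwise identity as the paper's proof, namely
$$
  \chi_{V^{\tensor n}}(\sigma;g_1,\dots,g_n)\;=\;\prod_{c}\chi_V(g_{i_k}\cdots g_{i_1}),
$$
the product running over the cycles $c=(i_1,\dots,i_k)$ of $\sigma$ — but the weight of the two write-ups falls on opposite halves of this identity. The paper's proof consists entirely of establishing the trace formula: it factors the action of $(\sigma;g_1,\dots,g_n)$ on $V^{\tensor n}$ as a tensor product over the cycles of $\sigma$, conjugates each factor to the form $(\varsigma_k;1,\dots,1,g)$ with $g=g_{i_k}\cdots g_{i_1}$, and computes the trace of that element on $V^{\tensor k}$ in a basis, finding $\on{tr}(g\mid V)$; the matching of the right-hand side with the class-function power operation is left implicit. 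You do the opposite: you cite the trace formula as ``standard'' (which it is — classical wreath-product character theory — but a reference or the short computation just described should be supplied if your proof is to stand alone, since that computation is the entire content of the paper's proof), and you spend your effort on the covering-space bookkeeping via Lemma \ref{lem:Cov(G)} and \eqref{eq:cycle_decomposition}, identifying the components of the cover attached to $[(\sigma;g_1,\dots,g_n)]$ with the cycles of $\sigma$ and their $G$-monodromies with the cyclic products $g_c$. In that sense the two proofs are complementary, and your version makes explicit a step the paper glosses over. One small correction: a character is not invariant under $g\mapsto g\inv$ alone, so the $\Aut(\ZZ)$-invariance you invoke must refer to the simultaneous action — inversion on $\mI\uG$ together with Galois conjugation on $\ZZ[\mu_\infty]$ — exactly as in the paper's remark that $\Aut(\widehat{\ZZ}_p^n)$ also acts on $L$ in the Hopkins--Kuhn--Ravenel example. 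This does not damage your proof: a connected cover of the oriented circle is canonically oriented (its fundamental group is the sublattice $k\ZZ\subset\ZZ$ with its preferred generator), so $P_n$ on $\on{1-Class}$ is well defined with no invariance hypothesis, and the identity, once proved pointwise, lands in the invariant subring automatically because both sides are characters.
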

\begin{Pf}{}
  Let $\varrho$ be a $\uG$-representation, and recall that
  $P_n(\varrho)$ is the $S_n\lwr\uG$-representation $\varrho^{\tensor
    n}$.
  We need to show
  \begin{eqnarray*}
   \chi_{\varrho^{\tensor n}}(\sigma;g_1,\dots,g_n)
   &=&\prod_{(i_1,\dots,i_k)}\chi_\varrho(g_{i_k}\cdots g_{i_1}),
  \end{eqnarray*}
  where the product is over all cycles of $\sigma$.
  Indeed, the action of $(\sigma;g_1,\dots,g_n)$ on
  $V^{\tensor n}$ can be written
  as the tensor product over the cycles of sigma
  of actions of $(\varsigma_k;g_{i_{1}},\dots,g_{i_k})$ on $V^{\tensor k}$,
  with
  \begin{eqnarray*}
    \varsigma_k = (1,\dots,k)\in S_k.
  \end{eqnarray*}
  Further
  \begin{eqnarray*}
    (\varsigma_k;g_{i_1},\dots, g_{i_k})&\sim & (\varsigma_k;1,\dots,1,g),
  \end{eqnarray*}
  where
  $g=g_{i_k}\cdots g_{i_1}$. Let $B=(e_1,\dots,e_d)$ be a basis of $V$. Then
  $(\varsigma_k;1,\dots,g)$ sends the basis element
  $e_{j_1}\tensor\dots\tensor e_{j_k}$ of $V^{\tensor k}$ to 
  $$
    \(g e_{j_k}\)\tensor e_{j_1}\tensor\dots\tensor e_{j_{k-1}}.
  $$
  This basis element can only make a non-trivial
  contribution to the trace on $V^{\tensor k}$ if we have ${j_1} = \dots = {j_k}$,
  and in this case the contribution to the trace is the $j_1^{th}$
  diagonal entry of $g$. We conclude that the trace of
  $(\varsigma_k;1,\dots,1,g)$ on $V^{\tensor k}$ equals the trace
  of $\on{Mat}_B(g)$ on $V$.
\end{Pf}{}

It would be nice to have a more conceptual proof of the proposition.
The Hopkins-Kuhn-Ravenel character maps
  \begin{eqnarray*}
    char\negmedspace : E_n(B\uG) & \longrightarrow &
    \on{n-Class_p}(\uG;L)^{Aut(\widehat{\ZZ}_p)^n }
  \end{eqnarray*}
form a natural transformation of Green
functors. Note that $Aut(\widehat Z^n_p)$ also acts on $L$, so this
is not a special case of the discussion in Section \ref{sec:nlambda}
Instead, the situation is closer to $Mod(\uG)$. The effect of $char$
on power operations is described using isogenies of formal groups. This
is the subject of \cite{Ando:Isogenies} and \cite{Ando:Hopkins:Strickland:sigma}.
\appendix
\section{Comparison to Webb's definition}\label{Webb-def-Sec}
%We now explain the precise sense in which fibered product squares are derived
%functors of pullback squares:
%let $D$ be the poset with four vertices $x$, $y$, $z$, and $w$ and relations
%generated by $x<z$, $y<z$, $w<x$ and $w<y$. Let $C$ be the
%sub-poset with vertices $x$, $y$, and $z$. 
%Let $\map iCD$ be the inclusion. 
%Then the right Kan extension along $i$
%$$
%  \map{\on{RKan}_i}{\Gpdf^C}{\Gpdf^D}
%$$
%is the functor that assigns to a diagram $\uK
%  \stackrel\delta\longrightarrow\uG\stackrel\gamma\longleftarrow\uH$
%  its pullback square.
%\begin{Lem}[{\cite[6.12]{Strickland}}]\label{derived-pullback-Lem} 
%  The fibered product square is the right derived functor 
%  $$
%    \map{\on{HoRKan}_i}{Ho(\Gpdf^C)}{Ho(\Gpdf^D)}.
%  $$
%\end{Lem}
%
%%%%%%%%%%%%%%%%%%%%%%%%%%%%%%%%%%%%%%%%%%%%%%%%%%
%%%%%%%%%%%%%%%%%%%%%%%%%%%%%%%%%%%%%%%%%%%%%%%%%%
%
%
%\begin{Cor}
%  Consider a commuting diagram
%  $$
%    \xymatrix{
%    \uK\ar[r]^\delta\ar[d]_{\sim}  & \uG\ar[d]^{\sim}
%    &\uH\ar[d]^{\sim}\ar[l]_\gamma\\ 
%    \uK'\ar[r]^{\delta'}  & \uG'
%    &\uH',\ar[l]_{\gamma'}
%    }
%  $$
%  where the vertical arrows are equivalences. Then the induced map of
%  fibered products 
%  $$ 
%    \uK\times_\uG\uH \longrightarrow \uK'\times_{\uG'}\uH' 
%  $$
%  is also an equivalence.
%\end{Cor}
%
%
%
%
In this section, we will compare our definition of global Mackey
functor with that of \cite{Webb:A_guide}. 
\begin{Prop}
  The axioms for global Mackey functors stated in Section
  \ref{global-Mackey-Sec} imply the Axioms for global Mackey functors
  formulated in \cite[8]{Webb:A_guide}, 
  where Webb's classes $\mathcal X$ and $\mathcal
  Y$ are chosen to contain all finite groups.\footnote{Webb attributes
  this formulation to Bouc \cite{Bouc:Foncteurs}.} 
\end{Prop}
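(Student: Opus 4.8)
The plan is to exhibit Webb's basic operations---restriction, transfer (induction), conjugation, inflation, and deflation---as instances of the author's $\phi^*$ and $\phi_*$, and then to verify each of Webb's defining relations using the axioms of Section \ref{global-Mackey-Sec}. Since every homomorphism of finite groups $\phi\colon G\to H$ factors as a surjection $G\twoheadrightarrow\phi(G)$ followed by an inclusion $\phi(G)\hookrightarrow H$, it suffices to treat inclusions and surjections separately. Accordingly I would read restriction $\res{G}{K}$ as $\iota^*$ and transfer $\ind{K}{G}$ as $\iota_*$ for an inclusion $\iota\colon K\hookrightarrow G$; inflation as $\pi^*$ and deflation as $\pi_*$ for a quotient $\pi\colon G\twoheadrightarrow G/N$ (deflation being available precisely because $\mathcal Y$ contains all finite groups); and conjugation as $(c^g)_*=\bigl((c^g)^*\bigr)\inv$. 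The identification $M(H)\cong M(\ul{G/H})$ and transport of structure along group isomorphisms are supplied by the Equivalences axiom.

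With this dictionary in place the bookkeeping relations are immediate. Transitivity of restriction and of inflation is functoriality of the contravariant part $M^*$, while transitivity of transfer and of deflation is functoriality of $M_*$. That an inner automorphism acts as the identity, and the conjugation relations among the $c^g$ more generally, follow from the Natural Isomorphisms axiom, since $c^g\colon G\to G$ is naturally isomorphic to $\id_G$. Finally, Webb's relation $\on{Def}\circ\on{Inf}=\pi_*\pi^*=\id$ across a quotient is precisely the Surjections axiom.

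The heart of the matter is the Mackey (double coset) formula, which I would obtain from the Fibered Products (push--pull) axiom. Feeding the cospan of inclusions $K\overset{\delta}{\hookrightarrow}G\overset{\gamma}{\hookleftarrow}H$ into that axiom gives $\delta^*\gamma_*=\beta_*\alpha^*$, where $\uF=K\times_G H$ is the fibered product. By Example \ref{double-coset-Exa} the isomorphism classes of $\uF$ are the double cosets $H\backslash G/K$, the stabilizer of a representative $g$ is $H\cap gKg\inv$, and the legs $\alpha,\beta$ are the evident inclusions, intertwined by conjugation by $g$. Combining this with the Coproducts axiom---so that $M(\uF)=\bigoplus_{[g]}M(H\cap gKg\inv)$ and $\alpha^*,\beta_*$ act summand-wise---and with the conjugation isomorphisms arising from the connecting arrows of Example \ref{double-coset-Exa} (via the Natural Isomorphisms axiom), the identity $\delta^*\gamma_*=\beta_*\alpha^*$ unwinds into
\[
  \res{G}{K}\circ\ind{H}{G}
  =\sum_{[g]\in H\backslash G/K}\ind{K\cap g\inv Hg}{K}\circ c^g\circ\res{H}{H\cap gKg\inv},
\]
which is Webb's Mackey axiom. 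The remaining compatibilities---between inflation and restriction, and between inflation and transfer---I would derive identically, by feeding the appropriate span built from a quotient map and a subgroup inclusion into the push--pull axiom and simplifying via the Coproducts and Natural Isomorphisms axioms.

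I expect the main obstacle to be organizational rather than conceptual: matching the groupoid-theoretic decomposition of the fibered product in Example \ref{double-coset-Exa} to Webb's explicit double-coset indexing, and keeping the conjugation isomorphisms and the directions of $\alpha$ and $\beta$ straight, so that the summand-wise push--pull reproduces Webb's formula verbatim. Once the Mackey formula and the inflation compatibilities are pinned down, the remaining verifications reduce to routine diagram chases.
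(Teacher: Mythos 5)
Your dictionary and your verification of functoriality, conjugation, the inflation--deflation identity, and the Mackey double-coset formula all track the paper's proof: the Mackey formula is exactly the push-pull axiom applied to the cospan of two inclusions, unwound via Example \ref{double-coset-Exa} and the Coproducts axiom, and the mixed compatibility (one inclusion, one surjection) does follow directly from the Fibered Products axiom, because the square of groups built from a surjection $\map{\beta}{G}{H}$ and a subgroup $K\leq H$, with corners $\beta\inv(K)$, $G$, $K$, $H$, is equivalent to a fibered-product square in $\Gpdf$.

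The gap is Webb's axiom for a pair of surjections, which your proposal never treats and which your closing claim---that the remaining compatibilities follow ``identically'' by feeding spans into the push-pull axiom---cannot deliver. Given surjections $\map{\alpha}{H}{K}$ and $\map{\beta}{H}{G}$, Webb requires
$$
  \gamma^*\circ\delta_* \;=\; \beta_*\circ\alpha^*
$$
for the square whose fourth corner is $H/\ker\alpha\ker\beta$, with $\gamma$, $\delta$ the induced surjections out of $G$ and $K$. This square of groups is \emph{not} equivalent to a fibered-product square in $\Gpdf$: the fibered product $\uF=G\times_{H/\ker\alpha\ker\beta}K$ is equivalent to the group pullback $L=\{(g,k)\mid \gamma(g)=\delta(k)\}\cong H/(\ker\alpha\cap\ker\beta)$, which is in general a proper quotient of $H$ (take $H=\ZZ/4$ and $\alpha=\beta$ the projection to $\ZZ/2$; then $L\cong\ZZ/2$). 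So the push-pull axiom only yields $\gamma^*\delta_*=\beta'_*(\alpha')^*$ for the legs $\alpha'$, $\beta'$ of $\uF$, and one must still compare this with $\beta_*\alpha^*$. The paper bridges the two by factoring $\alpha=\alpha'\circ\psi$ and $\beta=\beta'\circ\psi$ through the canonical surjection $\map{\psi}{H}{\uF}$ and invoking the Surjections axiom, giving $\beta_*\alpha^*=\beta'_*\psi_*\psi^*(\alpha')^*=\beta'_*(\alpha')^*$. In your write-up the Surjections axiom is used only in the special case $G=K$ (deflation after inflation is the identity); without the factorization argument above, Webb's two-surjection axiom remains unproved.
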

\begin{Pf}{}
  The Mackey axiom (double coset formula) follows from Example
  \ref{double-coset-Exa}. One checks that a diagram of groups
  $$
    \xymatrix{
    G\ar@{->>}[r]^{\beta}  &
    H \\
    {\beta\inv(K)}  \ar@{>->}[u] \ar@{->>}[r]  &
    K\ar@{>->}[u] 
    }
  $$ 
  is equivalent to a fibred-product square in $\Gpdf$. Consider now a
  diagram of surjective maps of groups
  $$
    \xymatrix{
    G\ar@{->>}[r]^{\gamma\phantom{xxxxxxx}}  &
    {H/\ker{\alpha}\ker{\beta}} \\
    H  \ar@{->>}[u]^{\beta} \ar@{->>}[r]_{\alpha}  &
    K.\ar@{->>}[u]_{\delta} 
    }
  $$ 
  In general, this diagram is not a fibred-product square. However, the
  fibred product $\uF := G\times_{H/\ker\alpha\ker\beta}K$ is
  equivalent to a finite group (namely the pullback of the diagram in
  the category of finite groups), and under this equivalence, the map
  $H\to\uF$ becomes a 
  surjective map of groups. Now our fibred-product axiom together with
  the surjection axiom imply the desired push-pull property for this
  square. 
\end{Pf}

\begin{Prop}\label{Webb=>Prop}
  The axioms for global Mackey functor formulated in \cite{Webb:A_guide} imply ours.
\end{Prop}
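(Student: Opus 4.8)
The plan is to recover the pair $(M_*,M^*)$ of functors on all of $\Gpdf$ from Webb's group-theoretic data, and then to check the four axioms of Section~\ref{global-Mackey-Sec} one at a time. The starting point is that every finite groupoid $\uG$ is equivalent to the disjoint union $\coprod_{[x]\in[\uG]}\Stab(x)$ of its stabilizer groups. I would therefore \emph{define} $M(\uG):=\bigoplus_{[x]\in[\uG]}M(\Stab(x))$ on objects, and extend the arrow-wise structure as follows. After replacing $\uG,\uH$ by skeleta and a given $\map\phi\uG\uH$ by a naturally isomorphic map, $\phi$ restricts on each component to a homomorphism $\psi\colon\Stab(x)\to\Stab(\phi x)$, well defined up to conjugacy. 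Factoring $\psi$ as a surjection $\pi\colon\Stab(x)\twoheadrightarrow\Stab(x)/\ker\psi$ followed by an inclusion $\iota\colon\Stab(x)/\ker\psi\hookrightarrow\Stab(\phi x)$, I would set $\psi^*=\on{inf}\circ\on{res}$ and $\psi_*=\on{ind}\circ\on{def}$ and assemble $\phi_*,\phi^*$ componentwise. Webb's formalism supplies precisely the restriction, induction, inflation, deflation, and isogation maps needed here.

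The first three axioms and functoriality should then be essentially formal. \textbf{Well-definedness} of $\psi_*$ and $\psi^*$ (independence of the surjection--inclusion factorization and of the conjugacy ambiguity) together with compatibility under composition are exactly Webb's isogation axiom and his relations among the four elementary operations. The \textbf{Coproducts} axiom holds by construction. Since an equivalence of skeletal groupoids is an isomorphism of disjoint unions of groups, on which $M$ is invertible, the \textbf{Equivalences} axiom follows. Finally a natural isomorphism between two maps $\uG\to\uH$ is component-wise conjugation in the target group, and Webb's isogation by inner elements is the identity, so $\phi_*$ and $\phi^*$ are unaffected, giving the \textbf{Natural isomorphisms} axiom.

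The real work is the \textbf{Fibered Products} axiom, and this is where I expect the main obstacle. Given a homotopy pullback square, I would first use the equivalence axiom to replace all four groupoids by skeleta and all maps by disjoint unions of homomorphisms. Factoring each of the two legs $\delta,\gamma$ as a surjection followed by an inclusion and filling in the resulting grid by iterated homotopy pullbacks, the pasting law for homotopy pullbacks exhibits the original square as a horizontal-and-vertical composite of elementary squares. By functoriality of $M_*$ and $M^*$, the push--pull identity $\delta^*\gamma_*=\beta_*\alpha^*$ for the whole square reduces to the same identity for each elementary cell, and every cell is of one of three standard types: both parallel legs are subgroup inclusions, both are quotient maps, or one pair is inclusions and the other quotient maps. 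For the inclusion--inclusion cells the fibered product is the double-coset groupoid of Example~\ref{double-coset-Exa}, and push--pull is verbatim Webb's Mackey (double coset) formula; the mixed cells are the inflation--restriction squares already met in the proof of the previous proposition; and the quotient--quotient cells are governed by Webb's inflation/deflation relations (including $\on{def}\circ\on{inf}=\id$). The delicate point, and the step I would spend the most care on, is the bookkeeping: verifying that the grid of iterated fibered products really does consist only of cells of these three pure types and that the pasting is compatible with the intervening groupoid equivalences, so that nothing beyond Webb's axioms is invoked.
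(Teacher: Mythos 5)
Your proposal is correct and follows essentially the same route as the paper: reconstruct $M$ on groupoids componentwise from stabilizer groups, dispose of the coproduct, natural-isomorphism and equivalence axioms formally, and reduce the fibered-product axiom via surjection--injection factorization and composition of fibered products to three pure cases, which are exactly Webb's axioms (5), (3) and (4). The one point to sharpen is your quotient--quotient case: the relation that does the work there is not $\on{def}\circ\on{inf}=\id$ (that is the surjections axiom) but Webb's commutation axiom for a group with two normal subgroups --- one identifies the homotopy fibered product of $H\twoheadrightarrow G\twoheadleftarrow K$ with the honest group $L=\{(h,k)\mid \alpha(h)=\beta(k)\}$ and identifies $G$ with $L/(\ker\gamma)(\ker\delta)$, which is precisely how the paper handles its Case 3.
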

\begin{Pf}{}
  Let $M$ be a global Mackey functor in the sense of \cite[8]{Webb:A_guide}.
  Set $M(\emptyset) = 0$.
  Recall that every groupoid is equivalent to a disjoint union
  of finite groups. Let $\uG$ be a connected groupoid, and let $x,y\in
  G_0$. 
  Let $g$ be an arrow from $x$ to $y$ in $G_1$. Then conjugation by
  $g$ defines an isomorphism from $\map{c^g}{\on{Stab}(x)}{\on{Stab}(y)}$. 
  For a different choice of arrow, $g'$, the two isomorphisms $c^g$
  and $c^{g'}$ will differ from each other by an inner automorphism of
  $\on{Stab(y)}$. Hence the (inverse) isomorphisms $(c^{g})_*$  and
  $(c^{g})^*$ are independent of the choice of $g$.  
  We set 
  $$
    M(\uG) := \lim_{x\in G_0}M(\on{Stab}(x)).
  $$ 
  Then for each $x\in G_0$, there is an isomorphism $\on{Stab}(x)\cong
  M(\uG)$. 
  This definition extends to maps between
  (connected) groupoids by restricting to stabilizers. By
  construction, 
  naturally isomorphic maps will yield the same result, and
  equivalences of groupoids will yield isomorphisms.  
  We extend $M$ to non-connected groupoids using our coproduct axiom
  (and the consequence for projections).
  Because of the coproduct axiom and the
  Note that our surjection axiom follows from Webb's axiom (4) in the
  special case that $G=K$.
  It suffices to check the
  fibred-product axiom for maps of finite
  groups $$H\stackrel\alpha\longrightarrow
  G\stackrel\beta\longleftarrow K.$$ 
  Each map of groups can be factored into a surjection
  followed by an injection, and fibred products compose as they
  should,
  so that we are reduced to three cases:
  \begin{description}
  \item[Case 1] Both maps are injective: this case follows from Webb's
    Axiom (5) (the Mackey axiom).
  \item[Case 2] One of the maps is injective and the other is
    surjective: this case follows from Webb's Axiom (3). 
  \item[Case 3] 
    If $\alpha$ and $\beta$ both are surjective, then the fibred
    product $H\times_G K$ is equivalent to a finite group, namely to
    $$
      L := \{(h,k)\mid \alpha(h) = \beta(k)\}.
    $$
    We have surjections $\map\gamma LK$ and $\map\delta LH$ and an
    isomorphism
    \begin{eqnarray*}
    L/(\ker\gamma)(\ker\delta) &\cong & 
    L/\{(h,k)\mid \alpha(h) = 1 = \beta(k)\}\\
    &\cong& G. 
    \end{eqnarray*}
    Using Webb's Axiom (4), we obtain the desired push-pull identity.
    \end{description}
\end{Pf}
\bibliographystyle{alpha}
\bibliography{lambdarings}
\end{document}